\documentclass{amsart}

\usepackage{amssymb}
\usepackage{amsmath}
\usepackage{amsthm}
\usepackage{amsfonts}

\usepackage{a4wide}
\usepackage{longtable}
\usepackage{multirow}

\newtheorem{theorem}{Theorem}[section]
\newtheorem{lemma}{Lemma}[section]
\newtheorem{proposition}{Proposition}[section]
\newtheorem{corollary}{Corollary}[section]

\theoremstyle{definition}
\newtheorem{definition}{Definition}[section]
\newtheorem{remark}{Remark}[section]
\newtheorem{example}{Example}[section]
\newtheorem{algorithm}{Algorithm}[section]
\newtheorem{conjecture}{Conjecture}[section]


\begin{document}

\title{\(p\)-capitulation over number fields with \(p\)-class rank two}

\author{Daniel C. Mayer}
\address{Naglergasse 53\\8010 Graz\\Austria}
\email{algebraic.number.theory@algebra.at}
\urladdr{http://www.algebra.at}
\thanks{Research supported by the Austrian Science Fund (FWF): P 26008-N25}

\subjclass[2000]{Primary 11R37, 11R29, 11R11, 11R16; Secondary 20D15}
\keywords{Hilbert \(p\)-class field tower, maximal unramified pro-\(p\) extension, \(p\)-capitulation of class groups,
real quadratic fields, totally real cubic fields;
finite \(3\)-groups, abelianization of type \((3,3)\).}

\date{May 12, 2016}

\begin{abstract}
Theoretical foundations of a new algorithm for determining the \(p\)-capitulation type \(\varkappa(K)\)
of a number field \(K\) with \(p\)-class rank \(\varrho=2\) are presented.
Since \(\varkappa(K)\) alone is insufficient for identifying the second \(p\)-class group
\(\mathfrak{G}=\mathrm{Gal}(\mathrm{F}_p^2{K}\vert K)\) of \(K\),
complementary techniques are developed for finding the nilpotency class and coclass of \(\mathfrak{G}\).
An implementation of the complete algorithm in the computational algebra system Magma
is employed for calculating the Artin pattern \(\mathrm{AP}(K)=(\tau(K),\varkappa(K))\)
of all \(34\,631\) real quadratic fields \(K=\mathbb{Q}(\sqrt{d})\)
with discriminants \(0<d<10^8\) and \(3\)-class group of type \((3,3)\).
The results admit extensive statistics of the second \(3\)-class groups
\(\mathfrak{G}=\mathrm{Gal}(\mathrm{F}_3^2{K}\vert K)\)
and the \(3\)-class field tower groups
\(G=\mathrm{Gal}(\mathrm{F}_3^\infty{K}\vert K)\).
\end{abstract}

\maketitle



\section{Introduction}
\label{s:Intro}

Let \(p\) be a prime number.
Suppose that \(K\) is an algebraic number field
with \(p\)-class group \(\mathrm{Cl}_p{K}:=\mathrm{Syl}_p\mathrm{Cl}_K\)
and \(p\)-\textit{elementary class group} \(\mathrm{E}_p{K}:=\mathrm{Cl}_p{K}\bigotimes_{\mathbb{Z}_p}\mathbb{F}_p\).
By class field theory
\cite[Cor. 3.1, p. 838]{Ma},
there exist precisely \(n:=\frac{p^\varrho-1}{p-1}\) distinct (but not necessarily non-isomorphic)
unramified cyclic extensions \(L_i\vert K\), \(1\le i\le n\), of degree \(p\),
if \(K\) possesses the \(p\)-\textit{class rank} \(\varrho:=\dim_{\mathbb{F}_p}\mathrm{E}_p{K}\).
For each \(1\le i\le n\), let \(j_{L_i\vert K}:\,\mathrm{Cl}_p{K}\to\mathrm{Cl}_p{L_i}\) denote the class extension homomorphism
induced by the ideal extension monomorphism
\cite[\S\ 1, p. 74]{Ma0}.
We let \(U_K\), resp. \(U_{L_i}\), be the group of units of \(K\), resp. \(L_i\).

\begin{proposition}
\label{prp:CapitulationKernel}
(Order of \(\ker j_{L_i\vert K}\))\\
The kernel \(\ker j_{L_i\vert K}\) of the class extension homomorphism
associated with an unramified cyclic extension \(L_i\vert K\) of degree \(\lbrack L_i:K\rbrack=p\)
is a subgroup of the \(p\)-elementary class group \(\mathrm{E}_p{K}\)
and has the \(\mathbb{F}_p\)-dimension

\begin{equation}
\label{eqn:CapitulationOrder}
1\le \dim_{\mathbb{F}_p}\ker j_{L_i\vert K}=\log_p\left(\lbrack L_i:K\rbrack\cdot (U_K:\mathrm{Norm}_{L_i\vert K}U_{L_i})\right)\le\varrho.
\end{equation}

\end{proposition}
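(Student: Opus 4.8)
The plan is to realize the capitulation kernel as a Tate cohomology group of the unit group and then to compute its order through a Herbrand quotient. Put \(G=\mathrm{Gal}(L_i\vert K)=\langle\sigma\rangle\), which is cyclic of order \(p\) because \(\lbrack L_i:K\rbrack=p\). If a class lies in \(\ker j_{L_i\vert K}\), represent it by an ideal \(\mathfrak{a}\) of \(K\) with \(\mathfrak{a}\mathcal{O}_{L_i}=(\alpha)\) principal in \(L_i\); since \(\mathfrak{a}\mathcal{O}_{L_i}\) is fixed by \(G\), the ideal \((\alpha)\) is \(G\)-invariant, so \(\alpha^{\sigma-1}\in U_{L_i}\), and a one-line telescoping computation gives \(\mathrm{Norm}_{L_i\vert K}(\alpha^{\sigma-1})=1\). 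I would therefore define
\[
\psi\colon\ \ker j_{L_i\vert K}\longrightarrow \hat{H}^{-1}(G,U_{L_i}),\qquad [\mathfrak{a}]\longmapsto[\alpha^{\sigma-1}],
\]
and check it is a well-defined homomorphism: changing the ideal representative multiplies \(\alpha\) by an element of \(K^\times\), which \(\sigma-1\) kills, and changing \(\alpha\) to \(\alpha u\) with \(u\in U_{L_i}\) alters \(\alpha^{\sigma-1}\) only by the coboundary \(u^{\sigma-1}\in(\sigma-1)U_{L_i}\).

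The heart of the argument is that \(\psi\) is an isomorphism. Injectivity is immediate: if \(\alpha^{\sigma-1}=u^{\sigma-1}\) with \(u\in U_{L_i}\), then \(\alpha/u\) is \(G\)-fixed, hence lies in \(K^\times\), so \(\mathfrak{a}\mathcal{O}_{L_i}=(\alpha/u)\mathcal{O}_{L_i}\) descends to a principal ideal of \(K\) and the class \([\mathfrak{a}]\) is trivial. For surjectivity I would take \(\varepsilon\in U_{L_i}\) with \(\mathrm{Norm}_{L_i\vert K}\varepsilon=1\) and invoke Hilbert 90, i.e. \(\hat{H}^{-1}(G,L_i^\times)=0\), to write \(\varepsilon=\alpha^{\sigma-1}\) for some \(\alpha\in L_i^\times\); then \((\alpha)\) is \(G\)-invariant. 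At this point the hypothesis that \(L_i\vert K\) is \emph{unramified} enters decisively, since it forces every \(G\)-invariant fractional ideal of \(L_i\) to be extended from \(K\), so \((\alpha)=\mathfrak{a}\mathcal{O}_{L_i}\) for a fractional ideal \(\mathfrak{a}\) of \(K\), which then lies in \(\ker j_{L_i\vert K}\) and satisfies \(\psi([\mathfrak{a}])=[\varepsilon]\). Because \(\hat{H}^{-1}(G,U_{L_i})\) is annihilated by \(\lvert G\rvert=p\), the isomorphism shows \(\ker j_{L_i\vert K}\) is elementary abelian; it is thus an \(\mathbb{F}_p\)-vector space contained in the socle \(\mathrm{Cl}_p{K}[p]\), an \(\mathbb{F}_p\)-space of dimension \(\varrho\) which we identify with \(\mathrm{E}_p{K}\), yielding the upper bound \(\dim_{\mathbb{F}_p}\ker j_{L_i\vert K}\le\varrho\).

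It remains to count. For the cyclic group \(G\) one has \(\hat{H}^0(G,U_{L_i})=U_K/\mathrm{Norm}_{L_i\vert K}U_{L_i}\), of order \((U_K:\mathrm{Norm}_{L_i\vert K}U_{L_i})\), and these two cohomology orders are tied together by the Herbrand quotient \(h(U_{L_i})=\lvert\hat{H}^0(G,U_{L_i})\rvert/\lvert\hat{H}^{-1}(G,U_{L_i})\rvert\). I would evaluate \(h(U_{L_i})\) by the standard logarithmic-embedding argument: modulo the finite, Herbrand-trivial group of roots of unity, \(U_{L_i}\) is commensurable with the kernel of the summation map on the permutation lattice of archimedean places, giving
\[
h(U_{L_i})=\frac{1}{\lbrack L_i:K\rbrack}\prod_{v\mid\infty}\lbrack L_{i,w}:K_v\rbrack.
\]
Since \(L_i\vert K\) is unramified at every place, the archimedean local degrees \(\lbrack L_{i,w}:K_v\rbrack\) are all \(1\), so \(h(U_{L_i})=1/\lbrack L_i:K\rbrack\). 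Combining,
\[
\lvert\ker j_{L_i\vert K}\rvert=\lvert\hat{H}^{-1}(G,U_{L_i})\rvert=\lbrack L_i:K\rbrack\cdot(U_K:\mathrm{Norm}_{L_i\vert K}U_{L_i}),
\]
and taking \(\log_p\) produces the stated dimension formula; the lower bound \(1\le\dim_{\mathbb{F}_p}\ker j_{L_i\vert K}\) drops out because the right-hand order is at least \(\lbrack L_i:K\rbrack=p\). I expect the surjectivity of \(\psi\) to be the main obstacle, as it is precisely there that unramifiedness must be turned into the statement that invariant ideals of \(L_i\) come from \(K\); the Herbrand-quotient computation is the secondary delicate point, again depending on unramifiedness at the infinite places to trivialize the local-degree factors.
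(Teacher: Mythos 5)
Your proof is correct and follows essentially the same route as the paper: the paper's proof consists of citations to Herbrand's theorem on the Herbrand quotient, to Hilbert's Theorem 94, and to the author's earlier work, and your argument is precisely the standard cohomological proof (the isomorphism \(\ker j_{L_i\vert K}\cong\hat{H}^{-1}(\mathrm{Gal}(L_i\vert K),U_{L_i})\) via Hilbert 90 and invariant ideals, followed by the unit-theorem evaluation of the Herbrand quotient with trivial archimedean local degrees) that those citations encapsulate. The only differences are cosmetic: you deduce the nontriviality of the kernel directly from the order formula instead of invoking Hilbert 94 as a separate ingredient, and you correctly read \(\mathrm{E}_p{K}\) as the socle \(\mathrm{Cl}_p{K}\lbrack p\rbrack\) (which is what the paper's later usage intends) rather than literally as the quotient \(\mathrm{Cl}_p{K}\otimes_{\mathbb{Z}_p}\mathbb{F}_p\).
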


\begin{proof}
The proof of the inclusion \(\ker j_{L_i\vert K}\le\mathrm{E}_p{K}\) was given in
\cite[\S\ 1, p. 74]{Ma0}
for \(p=3\), and generally in
\cite[Prop. 4.3.(1), p. 484]{Ma1}.
The relation
\(\#\ker j_{L_i\vert K}=\lbrack L_i:K\rbrack\cdot (U_K:\mathrm{Norm}_{L_i\vert K}U_{L_i})\)
for the unramified extension \(L_i\vert K\) is equivalent to the Theorem on the Herbrand quotient
\cite[Thm. 3, p. 92]{Hb}
and was proved in
\cite[Prop. 4.3, pp. 484--485]{Ma1}.
According to Hilbert's Theorem 94
\cite[p. 279]{Hi},
the kernel \(\ker j_{L_i\vert K}\) cannot be trivial.
\end{proof}

\begin{definition}
\label{dfn:Capitulation}
For each \(1\le i\le n\), the elementary abelian \(p\)-group
\(\ker j_{L_i\vert K}\) is called the \(p\)-\textit{capitulation kernel} of \(L_i\vert K\).
We speak about \textit{total capitulation}
\cite{Cg,CgFt}
if \(\dim_{\mathbb{F}_p}\ker j_{L_i\vert K}=\varrho\),
and \textit{partial capitulation} if \(1\le \dim_{\mathbb{F}_p}\ker j_{L_i\vert K}<\varrho\).
\end{definition}

If \(p\ge 3\) is an odd prime,
and \(K=\mathbb{Q}(\sqrt{d})\) is a quadratic field with fundamental discriminant \(d:=d_K\)
and \(p\)-class rank \(\varrho\ge 1\), then there arise the following possibilities for the \(p\)-capitulation kernel
in any of the unramified cyclic relative extensions \(L_i\vert K\) of degree \(p\),
which are absolutely dihedral extensions \(L_i\vert\mathbb{Q}\) of degree \(2p\),
according to
\cite[Prop. 4.1, p. 482]{Ma1}.

\begin{corollary}
\label{cor:QuadraticField}
(Partial and total \(p\)-capitulation over \(K=\mathbb{Q}(\sqrt{d})\) with \(\varrho\ge 2\))

\begin{equation}
\label{eqn:QuadraticCapitulation}
\dim_{\mathbb{F}_p}\ker j_{L_i\vert K}=
\begin{cases}
1 & \text{ if } K \text{ is complex},\ d<0, \\
1 & \text{ if } K \text{ is real},\ d>0, \text{ and } L_i \text{ is of type } \delta, \\
2 & \text{ if } K \text{ is real},\ d>0, \text{ and } L_i \text{ is of type } \alpha.
\end{cases}
\end{equation}

\noindent
The \(p\)-capitulation over \(K\) is total if and only if \(K\) is real with \(\varrho=2\), and \(L_i\) is of type \(\alpha\).

\end{corollary}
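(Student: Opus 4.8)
The plan is to reduce the entire statement to the unit norm index occurring in Proposition~\ref{prp:CapitulationKernel} and then to read off the three cases from the unit structure of $K$. Since $\lbrack L_i:K\rbrack=p$, formula \eqref{eqn:CapitulationOrder} specializes to
\[
\dim_{\mathbb{F}_p}\ker j_{L_i\vert K}=1+\log_p\bigl(U_K:\mathrm{Norm}_{L_i\vert K}U_{L_i}\bigr),
\]
and, because $\ker j_{L_i\vert K}$ is an elementary abelian $p$-group, the index $(U_K:\mathrm{Norm}_{L_i\vert K}U_{L_i})$ is forced to be a power of $p$. Note also that $\mathrm{Norm}_{L_i\vert K}U_{L_i}$ is a subgroup of $U_K$, since the norm of a unit is a unit. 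The whole argument therefore amounts to computing this $p$-power index in each case.

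First I would dispose of the complex case $d<0$, where $K$ has unit rank $0$, so that $U_K$ is its finite group of roots of unity. Since $\varrho\ge 2$ excludes the two exceptional fields $\mathbb{Q}(i)$ and $\mathbb{Q}(\sqrt{-3})$ (both of class number one), we have $U_K=\{\pm 1\}$ of order $2$. The index $(U_K:\mathrm{Norm}_{L_i\vert K}U_{L_i})$ divides $\#U_K=2$, yet it is a power of the odd prime $p$; hence it equals $1$, which forces $\dim_{\mathbb{F}_p}\ker j_{L_i\vert K}=1$.

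Next I would treat the real case $d>0$, where $U_K=\langle-1\rangle\times\langle\varepsilon\rangle$ for the fundamental unit $\varepsilon$, so that $U_K$ modulo torsion has rank $1$. Two norm computations on base-field units are decisive: since $p$ is odd, $\mathrm{Norm}_{L_i\vert K}(-1)=(-1)^p=-1$, and $\mathrm{Norm}_{L_i\vert K}(\varepsilon)=\varepsilon^{\lbrack L_i:K\rbrack}=\varepsilon^p$. The first shows $-1\in\mathrm{Norm}_{L_i\vert K}U_{L_i}$, so the quotient $U_K/\mathrm{Norm}_{L_i\vert K}U_{L_i}$ is cyclic, generated by the class of $\varepsilon$; the second shows $\varepsilon^p$ lies in the norm group, so this cyclic quotient has order dividing the prime $p$. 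Consequently the index is $1$ or $p$, and
\[
\dim_{\mathbb{F}_p}\ker j_{L_i\vert K}=
\begin{cases}
1 & \text{if } \varepsilon\in\mathrm{Norm}_{L_i\vert K}U_{L_i},\\
2 & \text{if } \varepsilon\notin\mathrm{Norm}_{L_i\vert K}U_{L_i}.
\end{cases}
\]
The remaining task is to match this dichotomy with the classification of \cite[Prop. 4.1, p. 482]{Ma1}: type $\delta$ must be exactly the case in which the fundamental unit is a norm of units (index $1$, dimension $1$), and type $\alpha$ the case in which it is not (index $p$, dimension $2$). I expect this identification---unwinding the definition of the types $\alpha$ and $\delta$ and confirming that it coincides with the norm behavior of $\varepsilon$---to be the one genuinely delicate point, since it is precisely where the dihedral structure of $L_i\vert\mathbb{Q}$ and the cited description enter.

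Finally, the equivalence for total capitulation follows by inspection of Definition~\ref{dfn:Capitulation}, for which total capitulation means $\dim_{\mathbb{F}_p}\ker j_{L_i\vert K}=\varrho$. The complex case and the real type-$\delta$ case both yield dimension $1<2\le\varrho$, so neither is ever total; the real type-$\alpha$ case yields dimension $2$, which equals $\varrho$ exactly when $\varrho=2$. Hence the $p$-capitulation over $K$ is total if and only if $K$ is real, $\varrho=2$, and $L_i$ is of type $\alpha$, as asserted.
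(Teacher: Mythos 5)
Your proof is correct and follows essentially the same route as the paper's: both specialize the dimension formula of Proposition~\ref{prp:CapitulationKernel} to the unit norm index $(U_K:\mathrm{Norm}_{L_i\vert K}U_{L_i})$ and then settle the real case by the cited characterization of the types $\alpha$ and $\delta$ through that index \cite[Prop.\ 4.2]{Ma1}. The only differences are cosmetic: you exclude the exceptional complex fields $\mathbb{Q}(i)$ and $\mathbb{Q}(\sqrt{-3})$ via the hypothesis $\varrho\ge 2$ rather than via their lack of unramified cyclic extensions of odd prime degree, and you make explicit the elementary norm computations on $-1$ and the fundamental unit $\varepsilon$ that the paper leaves inside the reference.
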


\begin{proof}
In this special case of a quadratic base field \(K\),
the extensions \(L_i\vert K\), \(1\le i\le n\), are pairwise non-isomorphic
although they share a common discriminant which is the \(p\)th power \(d_{L_i}=d_K^p\) of the fundamental discriminant of \(K\)
\cite[Abstract, p. 831]{Ma}.
If \(K\) is complex, the unit norm index equals \(1\),
since the cyclotomic quadratic fields do not possess unramified cyclic extensions of odd prime degree.
If \(K\) is real,
we have \((U_K:\mathrm{Norm}_{L_i\vert K}U_{L_i})=1\) \(\Leftrightarrow\) \(L_i\) is of type \(\delta\),
and \((U_K:\mathrm{Norm}_{L_i\vert K}U_{L_i})=p\) \(\Leftrightarrow\) \(L_i\) is of type \(\alpha\)
\cite[Prop. 4.2, pp. 482--483]{Ma1}.
\end{proof}

The organization of this article is the following.
In \S\
\ref{s:TheorPrereq},
basic theoretical prerequisites for the new capitulation algorithm are developed.
The implementation in Magma
\cite{MAGMA}
consists of a sequence of computational techniques whose actual code is given in \S\
\ref{s:ComputTechn}.
The final \S\
\ref{s:IntprNumerRslt}
demonstrates the results of an impressive application to the case \(p=3\),
presenting statistics of all \(3\)-capitulation types \(\varkappa(K)\),
Artin patterns \(\mathrm{AP}(K)\), and second \(3\)-class groups
\(\mathfrak{G}=\mathrm{Gal}(\mathrm{F}_3^2{K}\vert K)\)
of the \(34\,631\) real quadratic fields \(K=\mathbb{Q}(\sqrt{d})\)
with discriminants \(0<d<10^8\) and \(3\)-class group of type \((3,3)\),
which beats our own records in
\cite[\S\ 6]{Ma1}
and
\cite[\S\ 6]{Ma3}.
Theorems concerning \(3\)-tower groups
\(G=\mathrm{Gal}(\mathrm{F}_3^\infty{K}\vert K)\)
with derived length \(2\le\mathrm{dl}(G)\le 3\)
perfect the current state of the art.



\section{Theoretical prerequisites}
\label{s:TheorPrereq}

In this article, we consider algebraic number fields \(K\)
with \(p\)-class rank \(\varrho=2\), for a given prime number \(p\).
As explained in \S\
\ref{s:Intro},
such a field \(K\) has \(n=p+1\) unramified cyclic extensions \(L_i\) of relative degree \(p\).

\begin{definition}
\label{dfn:ArtinPattern}
By the \textit{Artin pattern} of \(K\) we understand the pair consisting of
the family \(\tau(K)\) of the \(p\)-class groups
of all extensions \(L_1,\ldots,L_n\) as its first component
(called the \textit{transfer target type})
and the \(p\)-\textit{capitulation type} \(\varkappa(K)\) as its second component
(called the \textit{transfer kernel type}),

\begin{equation}
\label{eqn:ArtinPattern}
\mathrm{AP}(K):=\left(\tau(K),\varkappa(K)\right),
\quad \tau(K):=\left(\mathrm{Cl}_p{L_i}\right)_{1\le i\le n},
\quad \varkappa(K):=\left(\ker{j_{L_i\vert K}}\right)_{1\le i\le n}.
\end{equation}

\end{definition}

\begin{remark}
\label{rmk:ArtinPattern}
We usually replace the group objects in the family
\(\tau(G)\), resp. \(\varkappa(G)\),
by ordered abelian type invariants, resp. ordered numerical identifiers
\cite[Rmk. 2.1]{Ma12}.
\end{remark}

We know from Proposition
\ref{prp:CapitulationKernel}
that each kernel \(\ker{j_{L_i\vert K}}\) is a subgroup of the \(p\)-elementary class group \(\mathrm{E}_p{K}\) of \(K\).
On the other hand, there exists a unique subgroup \(S<\mathrm{Cl}_K\) of index \(p\)
such that \(S=\mathrm{Norm}_{L_i\vert K}\mathrm{Cl}_{L_i}\), according to class field theory.
Thus we must first get an overview of the connections between subgroups of index \(p\)
and subgroups of order \(p\) of \(\mathrm{Cl}_K\).


\begin{lemma}
\label{lem:SbgOfIndexP}
Let \(p\) be a prime and \(A\) be a finite abelian group with positive \(p\)-rank
and with Sylow \(p\)-subgroup \(\mathrm{Syl}_p{A}\).
Denote by \(A_0\) the complement of \(\mathrm{Syl}_p{A}\) such that \(A\simeq A_0\times\mathrm{Syl}_p{A}\).
Then any subgroup \(S<A\) of index \(p\) is of the form \(S\simeq A_0\times U\)
with a subgroup \(U<\mathrm{Syl}_p{A}\) of index \(p\).
\end{lemma}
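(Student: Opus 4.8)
The plan is to exploit the coprimality between \(\#A_0\) and \(p\). The decisive observation, which I would establish first, is that the complement \(A_0\) is necessarily contained in every subgroup \(S\le A\) of index \(p\). Indeed, since \([A:S]=p\), the quotient \(A/S\) is cyclic of order \(p\). Restricting the canonical projection \(\pi\colon A\to A/S\) to \(A_0\), the image \(\pi(A_0)\) is a subgroup of \(A/S\), so its order divides \(p\); being a homomorphic image of \(A_0\), its order also divides \(\#A_0\), which is coprime to \(p\) by the definition of the complement. Hence \(\#\pi(A_0)\mid\gcd(\#A_0,p)=1\), so \(\pi(A_0)\) is trivial and \(A_0\le\ker\pi=S\).

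Next I would establish the direct decomposition of \(S\) itself by setting \(U:=S\cap\mathrm{Syl}_p{A}\) and checking that \(S=A_0\times U\). Using the given internal direct product \(A=A_0\times\mathrm{Syl}_p{A}\), every element \(s\in S\) factors uniquely as \(s=a\cdot x\) with \(a\in A_0\) and \(x\in\mathrm{Syl}_p{A}\). Since \(A_0\le S\) by the first step, the \(p\)-component \(x=a^{-1}s\) again lies in \(S\), whence \(x\in S\cap\mathrm{Syl}_p{A}=U\); this gives \(S\subseteq A_0\cdot U\). The reverse inclusion \(A_0\cdot U\subseteq S\) is immediate from \(A_0\le S\) and \(U\le S\), and the product is direct because \(A_0\cap\mathrm{Syl}_p{A}=1\). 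Therefore \(S=A_0\times U\) with \(U\le\mathrm{Syl}_p{A}\).

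Finally, the index of \(U\) follows by cancelling the common factor: from \(S=A_0\times U\) and \(A=A_0\times\mathrm{Syl}_p{A}\) one computes \([\mathrm{Syl}_p{A}:U]=[A:S]=p\), so \(U\) is a subgroup of \(\mathrm{Syl}_p{A}\) of index \(p\), as claimed. I expect the only conceptual step to be the first one, namely recognizing that the coprime-order complement must be swallowed by any subgroup of \(p\)-power index; the remaining two steps are routine manipulations of the internal direct product.
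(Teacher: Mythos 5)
Your proof is correct, but it is organized differently from the paper's. The paper's proof starts from the structural assertion that \emph{every} subgroup of \(A\simeq A_0\times\mathrm{Syl}_p{A}\) already splits as \(S\simeq S_0\times U\) with \(S_0\le A_0\) and \(U\le\mathrm{Syl}_p{A}\) (a fact that holds precisely because the two factors have coprime orders, though the paper does not spell this out), and then uses multiplicativity of the index, \(p=(A_0:S_0)\cdot(\mathrm{Syl}_p{A}:U)\), together with \(\gcd((A_0:S_0),p)=1\) to force \(S_0=A_0\). You instead prove the containment \(A_0\le S\) directly from the hypothesis \((A:S)=p\), by observing that the image of \(A_0\) in the order-\(p\) quotient \(A/S\) has order dividing \(\gcd(\#A_0,p)=1\), and then reconstruct the splitting \(S=A_0\times(S\cap\mathrm{Syl}_p{A})\) by hand from the internal direct product. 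Your route is more self-contained: it replaces the paper's unproved opening claim (which is false for general direct products and genuinely needs the coprimality) by an explicit two-line argument, at the cost of being tailored to the index-\(p\) hypothesis rather than describing all subgroups of \(A\) at once. Both arguments hinge on the same coprimality between \(\#A_0\) and \(p\), and both are complete for the statement as given.
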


\begin{proof}
Any subgroup \(S\) of \(A\simeq A_0\times\mathrm{Syl}_p{A}\) is of the shape
\(S\simeq S_0\times U\) with \(S_0\le A_0\) and \(U\le\mathrm{Syl}_p{A}\).
We have \(p=(A:S)=(A_0\times\mathrm{Syl}_p{A}:S_0\times U)=(A_0:S_0)\cdot(\mathrm{Syl}_p{A}:U)\).
Since \((A_0:S_0)\) is coprime to \(p\), we conclude that \(S_0=A_0\) and \((\mathrm{Syl}_p{A}:U)=p\).
\end{proof}

An application to the particular case \(A=\mathrm{Cl}_K\) and
\(S=\mathrm{Norm}_{L_i\vert K}\mathrm{Cl}_{L_i}<\mathrm{Cl}_K\)
shows that \(S\simeq(\mathrm{Cl}_K)_0\times U\) with \(U=\mathrm{Norm}_{L_i\vert K}\mathrm{Cl}_p{L_i}<\mathrm{Cl}_p{K}\).


\smallskip
Three cases must be distinguished, according to the abelian type of the \(p\)-class group \(\mathrm{Cl}_p{K}\).
We first consider the general situation of
a finite abelian group \(A\) with type invariants \((a_1,\ldots,a_n)\)
having \(p\)-rank \(r_p(A)=2\), that is,
\(n\ge 2\), \(p\mid a_n\), \(p\mid a_{n-1}\), but \(\gcd(p,a_{i})=1\) for \(i<n-1\).
Then the Sylow \(p\)-subgroup \(\mathrm{Syl}_p{A}\) of \(A\) is of type \((p^u,p^v)\) with integer exponents \(u\ge v\ge 1\),
and the \(p\)-elementary subgroup \(A_p\) of \(A\) is of type \((p,p)\).
We select generators \(x,y\) of \(\mathrm{Syl}_p{A}=\langle x,y\rangle\) such that
\(\mathrm{ord}(x)=p^u\) and \(\mathrm{ord}(y)=p^v\).

\begin{lemma}
\label{lem:OrdPow}
Let \(p\) be a prime number.\\
Suppose that \(G\) is a group
and \(x\in G\) is an element with finite order \(e:=\mathrm{ord}(x)\) divisible by \(p\).\\
Then the power \(x^m\) with exponent \(m:=\frac{e}{p}\) is an element of order \(\mathrm{ord}(x^m)=p\).
\end{lemma}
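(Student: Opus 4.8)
The plan is to compute the order of $x^m$ directly from the definition, using the elementary fact that in any group the order of a power $x^m$ equals $\mathrm{ord}(x)/\gcd(m,\mathrm{ord}(x))$. First I would observe that since $p\mid e$, the integer $m=e/p$ is well defined, and I would record the two things that must be verified: that $(x^m)^p=1$ (so the order divides $p$), and that $x^m\neq 1$ (so the order is not $1$). Because $p$ is prime, these two facts together force $\mathrm{ord}(x^m)=p$.

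For the first point I would simply note $(x^m)^p=x^{mp}=x^{e}=1$, using $mp=\frac{e}{p}\cdot p=e$ and the definition of $e$ as the order of $x$. This shows $\mathrm{ord}(x^m)\mid p$, so $\mathrm{ord}(x^m)\in\{1,p\}$ as $p$ is prime. For the second point I would argue that $x^m=1$ would force $e=\mathrm{ord}(x)\mid m=\frac{e}{p}$, which is impossible since $0<\frac{e}{p}<e$ (here $p\ge 2$ ensures $m<e$, and $p\mid e$ with $e\ge 1$ ensures $m\ge 1$). Hence $x^m\neq 1$, eliminating the case $\mathrm{ord}(x^m)=1$ and leaving $\mathrm{ord}(x^m)=p$.

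I do not expect any genuine obstacle here: the statement is a standard consequence of the order-of-a-power formula, and the only points requiring any care are the purely arithmetic facts $1\le m<e$ (to guarantee $x^m$ is neither trivial nor equal to a smaller admissible power) and the use of primality of $p$ to collapse the divisor set $\{1,p\}$ to a single value. If one prefers to avoid invoking the divisibility argument for non-triviality, an equivalent route is to cite the formula $\mathrm{ord}(x^m)=\frac{e}{\gcd(m,e)}$ and evaluate $\gcd\!\left(\frac{e}{p},e\right)=\frac{e}{p}$, which immediately yields $\mathrm{ord}(x^m)=p$; either presentation is fully routine.
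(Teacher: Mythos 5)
Your proof is correct, but your primary argument takes a genuinely different (and more economical) route than the paper's. The paper first proves the general order-of-a-power formula \(\mathrm{ord}(x^m)=\mathrm{ord}(x)/\gcd(m,\mathrm{ord}(x))\) from scratch --- writing \(d=\gcd(m,e)\), \(m=dm_0\), \(e=de_0\), showing \(\mathrm{ord}(x^m)\mid e_0\) and conversely \(e_0\mid\mathrm{ord}(x^m)\) --- and only then specializes to \(m=e/p\), evaluating \(\gcd(e/p,e)=e/p\); this is exactly the ``equivalent route'' you sketch in your final paragraph. Your main argument instead avoids the formula entirely: \((x^m)^p=x^e=1\) forces \(\mathrm{ord}(x^m)\in\{1,p\}\) by primality, and \(1\le m<e\) rules out \(x^m=1\). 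This is shorter and uses only what the specific statement needs (primality of \(p\)), whereas the paper's approach does more work but establishes a reusable general fact. One small presentational point: your opening sentence announces that you will use the \(\gcd\) formula, but the argument you actually give does not rely on it --- you should either drop that sentence or move it to the alternative-route remark where it belongs. Both arguments are complete and correct.
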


\begin{proof}
Generally, the order of a power \(x^m\) with exponent \(m\in\mathbb{Z}\)
is given by

\begin{equation}
\label{eqn:OrderOfPower}
\mathrm{ord}(x^m)=\frac{\mathrm{ord}(x)}{\gcd(m,\mathrm{ord}(x))}.
\end{equation}

This can be seen as follows.
Let \(d:=\gcd(m,e)\), and suppose that \(m=d\cdot m_0\) and \(e=d\cdot e_0\), then \(\gcd(m_0,e_0)=1\).
We have \((x^m)^{e_0}=x^{m_0\cdot d\cdot e_0}=(x^e)^{m_0}=1\),
and thus \(n:=\mathrm{ord}(x^m)\) is a divisor of \(e_0\).
On the other hand,
\(1=(x^m)^n=x^{m\cdot n}\), and thus \(e=d\cdot e_0\) divides \(m\cdot n=d\cdot m_0\cdot n\).
Consequently, \(e_0\) divides \(m_0\cdot n\), and thus necessarily \(e_0\) divides \(n\), since \(\gcd(m_0,e_0)=1\).
This yields \(n=e_0\), as claimed.

Finally, put \(m:=\frac{e}{p}\), then \(\mathrm{ord}(x^m)=\frac{e}{\gcd(m,e)}=\frac{e}{\gcd(\frac{e}{p},e)}=\frac{e}{\frac{e}{p}}=p\).
\end{proof}


\noindent
Now we apply Lemma
\ref{lem:OrdPow}
to the situation where
\(A\) is a finite abelian group with type invariants \((a_1,\ldots,a_n)\)
having \(p\)-rank \(r_p(A)=2\), that is,
\(n\ge 2\), \(p\mid a_n\), \(p\mid a_{n-1}\).

\begin{proposition}
\label{prp:pElementarySubgroup}
(\(p\)-elementary subgroup)\\
If \(A\) is generated by \(g_1,\ldots,g_n\),
then the \(p\)-elementary subgroup of \(A\) is given by \(\langle g_{n-1}^{a_{n-1}/p},g_{n}^{a_{n}/p}\rangle\).
\end{proposition}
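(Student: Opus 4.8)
The plan is to prove the asserted identity by establishing both inclusions between the \(p\)-elementary subgroup \(A_p\) and the group \(H:=\langle g_{n-1}^{a_{n-1}/p},\,g_{n}^{a_{n}/p}\rangle\). Throughout I would use the internal direct decomposition \(A=\langle g_1\rangle\times\cdots\times\langle g_n\rangle\) underlying the type invariants, in which \(\mathrm{ord}(g_i)=a_i\); this is the feature that turns all divisibility conditions into coordinatewise ones.

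First I would dispatch the inclusion \(H\le A_p\). Since \(p\mid a_{n-1}\) and \(p\mid a_n\), Lemma \ref{lem:OrdPow} applied once with \(e=a_{n-1}\) and once with \(e=a_n\) shows that \(g_{n-1}^{a_{n-1}/p}\) and \(g_{n}^{a_{n}/p}\) each have order exactly \(p\). Hence both generators, and therefore every element of \(H\), are annihilated by \(p\), so that \(H\le A_p\).

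For the reverse inclusion \(A_p\le H\) I would argue in coordinates. Write an arbitrary \(a\in A_p\) as \(a=g_1^{c_1}\cdots g_n^{c_n}\); the relation \(a^p=1\) forces \(a_i\mid p\,c_i\) for each index \(i\) separately, because the factors \(\langle g_i\rangle\) intersect trivially. For the indices \(i<n-1\) the coprimality \(\gcd(p,a_i)=1\) upgrades \(a_i\mid p\,c_i\) to \(a_i\mid c_i\), so that \(g_i^{c_i}=1\) and these coordinates disappear. For \(i\in\{n-1,n\}\), writing \(a_i=p\cdot(a_i/p)\) turns \(a_i\mid p\,c_i\) into \((a_i/p)\mid c_i\), whence \(g_i^{c_i}\in\langle g_i^{a_i/p}\rangle\). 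Consequently \(a\in H\), and combining the two inclusions yields \(A_p=H\).

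The step requiring the most care is precisely the separation of the coprime coordinates \(i<n-1\) from the two \(p\)-divisible ones \(i\in\{n-1,n\}\): this is where the rank-two hypothesis \(r_p(A)=2\) is used, and it is the crux of the argument even though it is elementary once the direct decomposition is made explicit. If one prefers to sidestep the coordinate computation, the forward inclusion can instead be obtained from a counting argument: the two generators lie in the distinct direct factors \(\langle g_{n-1}\rangle\) and \(\langle g_n\rangle\) and each has order \(p\), so \(H\) is their internal direct product of order \(p^2\); since \(A_p\) is of type \((p,p)\) and hence also of order \(p^2\), the inclusion \(H\le A_p\) already established forces equality.
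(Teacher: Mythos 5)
Your proposal is correct, and its core is the same as the paper's: apply Lemma \ref{lem:OrdPow} to \(g_{n-1}\) and \(g_n\) to produce two elements of exact order \(p\). Where you go beyond the paper is in actually justifying the final step. The paper's proof ends with the bare assertion that these two powers \emph{thus generate} the \(p\)-elementary subgroup; you prove it, twice over — once by the coordinatewise computation showing that any \(a\in A_p\) has trivial components in the factors \(\langle g_i\rangle\) with \(i<n-1\) and components lying in \(\langle g_i^{a_i/p}\rangle\) for \(i\in\lbrace n-1,n\rbrace\), and once by the order count \(\#H=p^2=\#A_p\) combined with \(H\le A_p\). The counting argument is presumably what the paper has in mind implicitly, since it has just recorded that \(A_p\) is of type \((p,p)\) when \(r_p(A)=2\); your coordinate argument is the more robust of the two, as it does not lean on that rank-two hypothesis to finish. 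One cosmetic slip: in the last paragraph you describe the counting argument as recovering ``the forward inclusion,'' but what it actually delivers is the equality \(H=A_p\) given the already-established inclusion \(H\le A_p\); the mathematics is fine, only the label is off.
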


\begin{proof}
Let generators of \(A\) corresponding to the abelian type invariants \((a_1,\ldots,a_n)\)
be \((g_1,\ldots,g_n)\), in particular, the trailing two generators have orders
\(\mathrm{ord}(g_{n-1})=a_{n-1}\) and \(\mathrm{ord}(g_{n})=a_{n}\) divisible by \(p\).
According to Lemma
\ref{lem:OrdPow},
the powers \(g_{n-1}^{a_{n-1}/p}\) and \(g_{n}^{a_{n}/p}\)
have exact order \(p\) and thus generate the \(p\)-elementary subgroup of \(A\).
\end{proof}


\begin{proposition}
\label{prp:OrderPSubgroups}
(Subgroups of order \(p\))\\
If the \(p\)-elementary subgroup \(A_p=\langle w,z\rangle\) of \(A\) is generated by \(w,z\),
then the subgroups of \(A_p\) of order \(p\) can be given by
\(M_1=\langle z\rangle\) and \(M_i=\langle wz^{i-2}\rangle\) for \(2\le i\le p+1\).
\end{proposition}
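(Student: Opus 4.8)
The plan is to exploit the fact, recorded just before Lemma \ref{lem:OrdPow}, that the \(p\)-elementary subgroup \(A_p\) is of type \((p,p)\), hence an elementary abelian \(p\)-group, which may be viewed as a two-dimensional vector space over the finite field \(\mathbb{F}_p\). Since \(A_p=\langle w,z\rangle\) has order \(p^2\) but is generated by the two elements \(w\) and \(z\), these generators must be \(\mathbb{F}_p\)-linearly independent; otherwise \(A_p\) would be cyclic of order at most \(p\). Thus \(\{w,z\}\) is an \(\mathbb{F}_p\)-basis of \(A_p\), and I would write the group additively, identifying \(w\) with \((1,0)\) and \(z\) with \((0,1)\).

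Under this identification the subgroups of order \(p\) are precisely the one-dimensional \(\mathbb{F}_p\)-subspaces, i.e. the lines through the origin, each of which is spanned by any one of its \(p-1\) nonzero elements. First I would count them: the number of one-dimensional subspaces of \(\mathbb{F}_p^2\) equals \(\frac{p^2-1}{p-1}=p+1\). So it suffices to exhibit \(p+1\) pairwise distinct subgroups of order \(p\) among the listed \(M_i\) and then invoke this count to conclude that the list is exhaustive.

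Next I would identify the listed subgroups as lines. The generator \(z=(0,1)\) spans the vertical line \(M_1\), while for \(2\le i\le p+1\) the generator \(wz^{i-2}\) corresponds to the vector \((1,i-2)\) and spans a line of slope \(i-2\). Each such generator is nontrivial by the linear independence of \(w\) and \(z\), so \(\mathrm{ord}(wz^{i-2})=p\) by Lemma \ref{lem:OrdPow} (or simply because every nonzero element of \(A_p\) has order \(p\)), confirming \(\#M_i=p\).

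Finally I would verify distinctness, which is the only real point to check. Two lines coincide if and only if their spanning vectors are \(\mathbb{F}_p\)-proportional. The line \(M_1\) is vertical, whereas each \(M_i\) with \(2\le i\le p+1\) has spanning vector with first coordinate \(1\), so \(M_1\) differs from all of them. Among \(M_2,\ldots,M_{p+1}\), the vectors \((1,i-2)\) are proportional precisely when their slopes agree; as \(i\) runs through \(2,\ldots,p+1\) the slope \(i-2\) runs through the complete residue system \(0,1,\ldots,p-1\) modulo \(p\), so the slopes are pairwise distinct and the lines are pairwise distinct. This produces exactly \(p+1\) distinct subgroups of order \(p\), which by the count above must be all of them. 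The main, and essentially only, obstacle is this bookkeeping of slopes, namely checking that the parametrization \(i\mapsto i-2\) sweeps out every direction in \(\mathbb{F}_p^2\) without repetition.
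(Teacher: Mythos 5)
Your proof is correct and follows essentially the same route as the paper's: both count the $\frac{p^2-1}{p-1}=p+1$ cyclic subgroups of order $p$ in the elementary abelian group $A_p\simeq(p,p)$ and then verify that the listed generators yield pairwise distinct subgroups, your ``slope'' bookkeeping being just the vector-space rephrasing of the paper's remark that the cycles of powers of $wz^{i}$ and $wz^{j}$ meet only in the neutral element. If anything, your distinctness check is spelled out more carefully than the paper's one-line justification.
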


\begin{proof}
According to the assumptions, \(A_p\) is elementary abelian of rank \(2\), that is,
of type \((p,p)\), and consists of the \(p^2\) elements \(\lbrace w^iz^j\mid 0\le i,j\le p-1\rbrace\),
in particular, \(w^0z^0=1\) is the neutral element.
A possible selection of generators for the \(\frac{p^2-1}{p-1}=p+1\) cyclic subgroups \(M_i\) of order \(p\)
is to take \(M_1=\langle z\rangle\) and \(M_i=\langle wz^{i-2}\rangle\) for \(2\le i\le p+1\),
since the two cycles of powers of \(wz^{i}\) and \(wz^{j}\) for \(1\le i<j\le p-1\) meet in the neutral element only.
\end{proof}


\begin{proposition}
\label{prp:IndexPandOrderP}
(Connection between subgroups of index \(p\), resp. order \(p\))

\begin{enumerate}

\item
If \(u=v=1\), which is equivalent to \(A_p=\mathrm{Syl}_p{A}\), then\\
\(\lbrace U<\mathrm{Syl}_p{A}\mid (\mathrm{Syl}_p{A}:U)=p\rbrace=\lbrace U<A_p\mid\#U=p\rbrace\).

\item
If \(u>v=1\), then there exists a unique bicyclic subgroup \(\langle x^p,y\rangle\) of index \(p\)
which contains \(A_p\).
The other \(p\) subgroups \(U\) of index \(p\) are cyclic of order \(p^u\),
and they only contain the unique subgroup \(\langle x^{p^{u-1}}\rangle\) of \(A_p\)
generated by the \(p^{u-1}\)th powers.

\item
If \(u\ge v>1\), then each subgroup \(U<\mathrm{Syl}_p{A}\) of index \(p\)
completely contains the \(p\)-elementary subgroup \(A_p\).

\end{enumerate}

\end{proposition}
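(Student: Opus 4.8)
The plan is to reduce to the Sylow $p$-subgroup and then to exploit that every index-$p$ subgroup of a finite abelian $p$-group contains its Frattini subgroup. By Lemma~\ref{lem:SbgOfIndexP}, the index-$p$ subgroups of $A$ are exactly the sets $A_0\times U$ with $U<P$ of index $p$, where $P:=\mathrm{Syl}_p{A}=\langle x\rangle\times\langle y\rangle$ is of type $(p^u,p^v)$; so it suffices to work inside $P$. By Proposition~\ref{prp:pElementarySubgroup} the $p$-elementary subgroup is $A_p=\langle x^{p^{u-1}},y^{p^{v-1}}\rangle$. The decisive observation is that every index-$p$ subgroup $U<P$ contains the subgroup of $p$th powers $P^p=\langle x^p,y^p\rangle$, and that the $p+1$ such $U$ correspond bijectively to the $p+1$ one-dimensional subspaces of the two-dimensional $\mathbb{F}_p$-vector space $P/P^p$. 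This single picture handles all three cases.

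For part (1), where $u=v=1$, one has $P^p=1$ and $P=A_p$ elementary abelian of type $(p,p)$; the lines of $P/P^p=P$ are simultaneously the subgroups of index $p$ and the subgroups of order $p$, so the two families coincide. For part (3), where $u,v\ge 2$, I would observe that $x^{p^{u-1}}=(x^p)^{p^{u-2}}$ and $y^{p^{v-1}}=(y^p)^{p^{v-2}}$ both lie in $P^p$, whence $A_p\le P^p\le U$ for every index-$p$ subgroup $U$; this is exactly the claim. Both cases are formal consequences of the Frattini remark.

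The substance is in part (2), where $u\ge 2$ but $v=1$, so that $P^p=\langle x^p\rangle$ because $y^p=1$. Here I would lift the lines of $P/P^p$ explicitly, writing $\bar x,\bar y$ for the images of $x,y$: the line $\langle\bar y\rangle$ yields the bicyclic group $\langle x^p,y\rangle$ of type $(p^{u-1},p)$, which contains $A_p$ since $x^{p^{u-1}}\in\langle x^p\rangle$ and $y\in\langle y\rangle$; the remaining lines $\langle\bar x+k\bar y\rangle$, $0\le k\le p-1$, yield the groups $\langle xy^k\rangle$. Using $y^p=1$ one gets $(xy^k)^p=x^p$, so $x^p\in\langle xy^k\rangle$, and hence $\langle xy^k\rangle$ is cyclic of order $p^u$ and index $p$; in particular $\langle x^p,y\rangle$ is the only bicyclic member. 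To identify the part of $A_p$ inside a cyclic $\langle xy^k\rangle$, I would compute its unique order-$p$ subgroup as $\langle(xy^k)^{p^{u-1}}\rangle$; since $v=1$ forces $y^{kp^{u-1}}=1$, this equals $\langle x^{p^{u-1}}\rangle$ for every $k$. As $A_p$ has exponent $p$, it follows that $\langle xy^k\rangle\cap A_p=\langle x^{p^{u-1}}\rangle$, which is precisely the assertion.

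The main obstacle is the bookkeeping in part (2): one must check that the $p$ lifts $\langle xy^k\rangle$ are genuinely cyclic of full order $p^u$ rather than accidentally bicyclic, and that their order-$p$ socles all collapse onto the single subgroup $\langle x^{p^{u-1}}\rangle$. Both points rest on the identities $(xy^k)^p=x^p$ and $(xy^k)^{p^{u-1}}=x^{p^{u-1}}$, which depend decisively on $v=1$; I would derive them directly, or from the order formula~\eqref{eqn:OrderOfPower} of Lemma~\ref{lem:OrdPow} applied in the cyclic factors. A final routine verification is that the bicyclic subgroup together with these $p$ cyclic subgroups exhaust all $p+1$ subgroups of index $p$, which the Frattini-quotient count guarantees.
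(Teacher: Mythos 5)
Your proof is correct, and it is more detailed and self-contained than the one in the paper, though it travels a different road. The paper argues by classifying the possible abelian type invariants of an index-$p$ subgroup $U<\mathrm{Syl}_p{A}$ in each of the three cases: for $u=v=1$ a counting of orders, for $u>v=1$ the dichotomy cyclic of type $(p^u)$ versus bicyclic of type $(p^{u-1},p)$, and for $u\ge v>1$ the two types $(p^u,p^{v-1})$ and $(p^{u-1},p^v)$, both of which dominate $(p,p)$. That argument is terse and leaves implicit both why a subgroup whose type dominates $(p,p)$ must contain the specific subgroup $A_p$ (this uses that $A_p$ is the full $p$-torsion of $\mathrm{Syl}_p{A}$, hence its unique subgroup of type $(p,p)$) and why all the cyclic subgroups in case (2) share the single socle $\langle x^{p^{u-1}}\rangle$. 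Your route through the Frattini subgroup $P^p$ and the bijection between index-$p$ subgroups of $P$ and lines of $P/P^p$ settles both points explicitly: parts (1) and (3) become formal (in case (3) one has $A_p\le P^p\le U$), and in part (2) the identities $(xy^k)^p=x^p$ and $(xy^k)^{p^{u-1}}=x^{p^{u-1}}$, valid precisely because $y^p=1$, pin down the unique bicyclic lift $\langle x^p,y\rangle$ and the common socle of the $p$ cyclic lifts, while the Frattini count guarantees exhaustion. What the paper's approach buys is brevity; what yours buys is a complete verification of exactly the containment statements that the proposition actually asserts.
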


\begin{proof}
If \(u=v=1\), then \(\mathrm{Syl}_p{A}\simeq (p,p)\simeq A_p\).
Thus, \(p^2=(A_p:1)=(A_p:U)\cdot (U:1)\) implies
\((A_p:U)=(U:1)=p\), for each proper subgroup \(U\).

If \(u>v=1\), then a subgroup \(U\) of index \(p\) is
either of type \((p^u)\), i.e., cyclic,
or of type \((p^{u-1},p)\ge (p,p)\).

If \(u\ge v>1\), then each subgroup \(U\) of index \(p\) is
either of type \((p^u,p^{v-1})>(p,p)\) 
or of type \((p^{u-1},p^v)>(p,p)\).
\end{proof}


\begin{theorem}
\label{thm:TausskyAB}
(Taussky's conditions \(\mathrm{A}\) and \(\mathrm{B}\))\\
Let \(L\vert K\) be an unramified cyclic extension of prime degree \(p\)
of a base field \(K\) with \(p\)-class rank \(\varrho=2\).
Suppose that \(S=\mathrm{Norm}_{L\vert K}\mathrm{Cl}_{L}<\mathrm{Cl}_K\)
and \(U=\mathrm{Norm}_{L\vert K}\mathrm{Cl}_p{L}<\mathrm{Cl}_p{K}\)
are the subgroups of index \(p\) associated with \(L\vert K\),
according to class field theory.\\
Then, we generally have \(\ker j_{L\vert K}\bigcap S=\ker j_{L\vert K}\bigcap U\),
and in particular:

\begin{enumerate}

\item
If \(u=v=1\), then\\
\(L\) is of type \(\mathrm{A}\) if either \(\ker j_{L\vert K}=\mathrm{E}_p{K}\) or \(\ker j_{L\vert K}=U\), and\\
\(L\) is of type \(\mathrm{B}\) if \(\ker j_{L\vert K}\notin\lbrace\mathrm{E}_p{K},U\rbrace\).

\item
If \(u>v=1\), let \(N:=\langle x^p,y\rangle<\mathrm{Cl}_p{K}\) denote the unique bicyclic subgroup of index \(p\), then\\
\(L\) is of type \(\mathrm{A}\) if
either \(\ker j_{L\vert K}=\mathrm{E}_p{K}\) or \(U=N\) or \(U\ne N\) and \(\ker j_{L\vert K}=\langle x^{p^{u-1}}\rangle\), and\\
\(L\) is of type \(\mathrm{B}\) if \(U\ne N\) and \(\ker j_{L\vert K}\notin\lbrace\mathrm{E}_p{K},\langle x^{p^{u-1}}\rangle\rbrace\).

\item
If \(u\ge v>1\), then \(L\) is always of type \(\mathrm{A}\).

\end{enumerate}

\end{theorem}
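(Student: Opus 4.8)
The plan is to separate a general identity, valid in all three cases, from the case-by-case structural analysis, and to reduce Taussky's dichotomy to the single question of whether the capitulation kernel meets the norm subgroup $U$ nontrivially.

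First I would establish the asserted identity $\ker j_{L\vert K}\cap S=\ker j_{L\vert K}\cap U$. By Proposition \ref{prp:CapitulationKernel} the kernel $\ker j_{L\vert K}$ lies in $\mathrm{E}_p{K}\le\mathrm{Cl}_p{K}=\mathrm{Syl}_p\mathrm{Cl}_K$, so it consists of elements of $p$-power order. The application of Lemma \ref{lem:SbgOfIndexP} to $A=\mathrm{Cl}_K$ recorded immediately after its proof writes $S\simeq(\mathrm{Cl}_K)_0\times U$ with $U=S\cap\mathrm{Cl}_p{K}$ the $p$-part of $S$. Since an element of $p$-power order lies in $(\mathrm{Cl}_K)_0\times U$ if and only if it already lies in $U$, membership of an element of $\ker j_{L\vert K}$ in $S$ is equivalent to its membership in $U$; this gives the identity. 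I expect this step to be routine.

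Next I would recall that $L$ is of Taussky type $\mathrm{A}$ exactly when the capitulation kernel meets the norm subgroup nontrivially, $\ker j_{L\vert K}\cap U\neq 1$, and of type $\mathrm{B}$ otherwise. By the identity just proved this condition may equivalently be tested against $S$. By Proposition \ref{prp:CapitulationKernel} (using $\varrho=2$) the group $\ker j_{L\vert K}$ is a nontrivial subgroup of $\mathrm{E}_p{K}=A_p$ of $\mathbb{F}_p$-dimension $1$ or $2$; it therefore remains to compute $\ker j_{L\vert K}\cap U$ in each of the three cases of Proposition \ref{prp:IndexPandOrderP}.

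Case $(3)$, $u\ge v>1$, is immediate: Proposition \ref{prp:IndexPandOrderP}(3) gives $A_p\le U$, hence $\ker j_{L\vert K}\cap U=\ker j_{L\vert K}\neq 1$ and $L$ is always of type $\mathrm{A}$. In case $(1)$, $u=v=1$, the subgroup $U$ itself has order $p$, so $\ker j_{L\vert K}\cap U\neq 1$ holds precisely when $U\le\ker j_{L\vert K}$, that is when $\ker j_{L\vert K}=U$ or $\ker j_{L\vert K}=\mathrm{E}_p{K}$, which is the stated criterion. The genuine work is in case $(2)$, $u>v=1$, and this is where I expect the main obstacle: invoking Proposition \ref{prp:IndexPandOrderP}(2) I would treat the unique bicyclic $U=N=\langle x^p,y\rangle$ separately, where $A_p\le N$ forces $\ker j_{L\vert K}\cap U=\ker j_{L\vert K}\neq 1$ and hence type $\mathrm{A}$; for the remaining $p$ cyclic subgroups $U\neq N$ of order $p^u$, whose only subgroup of order $p$ is $\langle x^{p^{u-1}}\rangle$, the intersection reduces to $\ker j_{L\vert K}\cap\langle x^{p^{u-1}}\rangle$, which is nontrivial exactly when $\ker j_{L\vert K}=\langle x^{p^{u-1}}\rangle$ or $\ker j_{L\vert K}=\mathrm{E}_p{K}$. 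The delicate point is the bookkeeping identifying $\langle x^{p^{u-1}}\rangle$ as the unique order-$p$ subgroup of $A_p$ contained in a cyclic index-$p$ subgroup, for which Proposition \ref{prp:OrderPSubgroups} together with the order formula of Lemma \ref{lem:OrdPow} supplies the needed control.
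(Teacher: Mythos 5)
Your proposal is correct and follows the same route as the paper, which simply declares the theorem ``an immediate consequence of Proposition~\ref{prp:IndexPandOrderP}''; you have merely filled in the details that the paper leaves implicit (the reduction of $\ker j_{L\vert K}\cap S$ to $\ker j_{L\vert K}\cap U$ via the splitting $S\simeq(\mathrm{Cl}_K)_0\times U$, and the three-case computation of $\ker j_{L\vert K}\cap U$ using the structure of the index-$p$ subgroups). All three case analyses check out against Proposition~\ref{prp:IndexPandOrderP} and the definition of the Taussky conditions in \eqref{eqn:TAB}.
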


\begin{proof}
This is an immediate consequence of Proposition
\ref{prp:IndexPandOrderP}.
\end{proof}


\begin{theorem}
\label{thm:Orbits}
(Orbits of TKTs expressing the independence of renumeration)

\begin{enumerate}

\item
If \(u=v=1\), then \(\varkappa\sim\lambda\) if and only if
\(\lambda=\sigma_0^{-1}\circ\varkappa\circ\sigma\) for some permutation \(\sigma\in S_{p+1}\)
and its extension \(\sigma_0\in S_{p+2}\) with \(\sigma_0(0)=0\).

\item
If \(u>v=1\), then \(\varkappa\sim\lambda\) if and only if
\(\lambda=\pi_0^{-1}\circ\varkappa\circ\rho_\ast\) for two permutations \(\pi,\rho\in S_{p}\)
and the extensions \(\pi_0\in S_{p+2}\) with \(\pi_0(0)=0\), \(\pi_0(p+1)=p+1\),
and \(\rho_\ast\in S_{p+1}\) with \(\rho_\ast(p+1)=p+1\).

\item
If \(u\ge v>1\), then \(\varkappa\sim\lambda\) if and only if
\(\lambda=\sigma_0^{-1}\circ\varkappa\circ\tau\) for two permutations \(\sigma,\tau\in S_{p+1}\)
and the extension \(\sigma_0\in S_{p+2}\) with \(\sigma_0(0)=0\).

\end{enumerate}

\end{theorem}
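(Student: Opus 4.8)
The plan is to read the equivalence \(\sim\) as the action of the symmetry group of the intrinsic combinatorial structure underlying a capitulation type, and then to extract that symmetry group directly from the incidence data furnished by Proposition \ref{prp:IndexPandOrderP}. A \emph{numerical} capitulation type arises from the intrinsic family \((\ker j_{L_i\vert K})_i\) through two labelings: a numbering of the \(p+1\) extensions \(L_i\), equivalently of the index-\(p\) subgroups \(U_i=\mathrm{Norm}_{L_i\vert K}\mathrm{Cl}_p L_i<\mathrm{Cl}_p K\) (the domain \(\{1,\dots,p+1\}\)), and a numbering of the \(p+1\) order-\(p\) subgroups \(M_j<A_p=\mathrm{E}_p K\) in which the kernels can lie (the codomain \(\{1,\dots,p+1\}\)), the extra symbol \(0\) being reserved for the total-capitulation value \(\ker j_{L_i\vert K}=A_p\). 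Since \(A_p\) is intrinsically distinguished from every \(M_j\), this value is fixed under any relabeling, which explains the passage from \(S_{p+1}\) to the extensions \(\sigma_0,\pi_0\in S_{p+2}\) with \(\sigma_0(0)=0\). Two numberings produce the same type precisely when they differ by a relabeling respecting the one intrinsic datum that links the two index sets, namely the incidence \(M_j\le U_i\) between order-\(p\) and index-\(p\) subgroups of \(\mathrm{Cl}_p K\). Thus \(\varkappa\sim\lambda\) means \(\lambda=\beta^{-1}\circ\varkappa\circ\alpha\) for a pair \((\alpha,\beta)\) of incidence-preserving permutations, and the theorem reduces to computing this automorphism group in each of the three cases.

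That incidence is exactly what Proposition \ref{prp:IndexPandOrderP} records. If \(u=v=1\), then \(\mathrm{Syl}_p A=A_p\), each \(U_i\) is itself one of the \(M_j\), and the incidence is the perfect matching \(M_j\le U_i\Leftrightarrow j=i\); an incidence-preserving pair must satisfy \(\alpha=\beta\), leaving the single diagonal permutation \(\sigma\in S_{p+1}\) of case (1). If \(u>v=1\), Proposition \ref{prp:IndexPandOrderP}(2) shows that the unique bicyclic \(N=\langle x^p,y\rangle\) contains all of \(A_p\), hence every \(M_j\), while each of the remaining \(p\) cyclic subgroups contains only \(\langle x^{p^{u-1}}\rangle\); so \(N\) is the unique index-\(p\) subgroup incident to all \(M_j\) and \(\langle x^{p^{u-1}}\rangle\) is the unique order-\(p\) subgroup incident to all \(U_i\). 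Any incidence automorphism fixes these two distinguished objects, permutes the remaining \(p\) extensions by some \(\rho\in S_p\) and the remaining \(p\) subgroups by some \(\pi\in S_p\); because no incidence links a cyclic \(U_i\) to any \(M_j\ne\langle x^{p^{u-1}}\rangle\), the permutations \(\rho\) and \(\pi\) are independent, and extending them to fix the reserved labels yields \(\rho_*\in S_{p+1}\) with \(\rho_*(p+1)=p+1\) and \(\pi_0\in S_{p+2}\) with \(\pi_0(0)=0\), \(\pi_0(p+1)=p+1\), as in case (2). Finally, if \(u\ge v>1\), Proposition \ref{prp:IndexPandOrderP}(3) gives \(A_p\le U_i\) for every \(i\), so every \(M_j\) is incident to every \(U_i\): the incidence is the complete bipartite relation, imposes no constraint, and the automorphism group is the full independent product, giving arbitrary \(\tau\in S_{p+1}\) on the domain and \(\sigma\in S_{p+1}\) (via \(\sigma_0\)) on the codomain, as in case (3).

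The main obstacle is conceptual rather than computational: one must justify that the renumeration freedom is governed \emph{solely} by the relation \(M_j\le U_i\) and by nothing finer. Concretely, in the cases \(u>v=1\) and \(u\ge v>1\) I must check that no further intrinsic datum — for instance the isomorphism type of an individual \(U_i\), which in the subcase \(u>v>1\) does single out one maximal subgroup of type \((p^{u-1},p^v)\) from the others of type \((p^u,p^{v-1})\) — is visible to the capitulation type, and so cannot restrict the admissible relabelings; this is precisely what forces the domain and codomain permutations to be genuinely independent in cases (2) and (3), in contrast to the forced diagonal coincidence \(\alpha=\beta\) of case (1). Once the incidence structures have been identified through Proposition \ref{prp:IndexPandOrderP}, reading off their automorphism groups and translating them via \(\lambda=\beta^{-1}\circ\varkappa\circ\alpha\) into the stated normal forms is routine.
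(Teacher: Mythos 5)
Your argument is correct and follows essentially the same route as the paper: both read \(\sim\) as renumeration freedom and extract the admissible permutation pairs from the containment relations between index-\(p\) and order-\(p\) subgroups recorded in Proposition \ref{prp:IndexPandOrderP} --- coincidence of the two families forcing a single \(\sigma\) when \(u=v=1\), the distinguished pair \(N\), \(\langle x^{p^{u-1}}\rangle\) pinned to the subscript \(p+1\) when \(u>v=1\), and complete independence when \(u\ge v>1\). The paper delegates the details of cases (1) and (2) to external references, whereas you make the same reasoning self-contained by packaging it as the automorphism group of the incidence structure; the residual issue you flag (that nothing finer than incidence constrains the relabeling) is a matter of how \(\sim\) is defined in those references and is not addressed in the paper's proof either.
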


\begin{proof}
The proof for the case \(u=v=1\) was given in 
\cite[p. 79]{Ma0}
and
\cite[Rmk. 5.3, pp. 87--88]{Ma9}.
It is the unique case where subgroups of index \(p\)
coincide with subgroups of order \(p\),
and a renumeration of the former enforces a renumeration of the latter,
expressed by a single permutation \(\sigma\in S_{p+1}\) and its inverse \(\sigma^{-1}\).

If \(u>v=1\), then the distinguished subgroups
\(U_{p+1}=N=\langle x^p,y\rangle\simeq(p^{u-1},p)\) of index \(p\), and
\(V_{p+1}=\langle x^{p^{u-1}}\rangle\) of order \(p\), should have the fixed subscript \(p+1\).
The other \(p\) subgroups \(U_i\), resp. \(V_i\), can be renumerated
completely independently of each other, which can be expressed by two independent
permutations \(\pi,\rho\in S_{p}\).
For details, see
\cite[Rmk. 5.6, p. 89]{Ma9}.

In the case \(u\ge v>1\), finally, the \(p+1\) subgroups of index \(p\) of \(\mathrm{Cl}_p{K}\)
and the \(p+1\) subgroups of order \(p\) of \(\mathrm{Cl}_p{K}\) can be renumerated
completely independently of each other, which can be expressed by two independent
permutations \(\sigma,\tau\in S_{p+1}\).
\end{proof}



\section{Computational techniques}
\label{s:ComputTechn}

In this section, we present the implementation of our new algorithm
for determining the Artin pattern \(\mathrm{AP}(K)\) of a number field \(K\)
with \(p\)-class rank \(\varrho=2\) in MAGMA
\cite{BCP,BCFS,MAGMA},
which requires version V2.21--8 or higher.
Algorithm
\ref{alg:BasisField}
returns the entire class group \(C:=\mathrm{Cl}_K\) of the base field \(K\),
together with an invertible mapping \(mC\) from classes to representative ideals.


\begin{algorithm}
\label{alg:BasisField}
(Construction of the base field \(K\) and its class group \(C\))\\
\textbf{Input:}
The fundamental discriminant \(d\) of a quadratic field \(K=\mathbb{Q}(\sqrt{d})\).\\
\textbf{Code:}
\texttt{
\begin{tabbing}
for \= \kill
SetClassGroupBounds("GRH");\\
if (IsFundamental(d) and not (1 eq d)) then\+\\
   ZX<X> := PolynomialRing(Integers());\\
   P := X\({}\,\hat{}\,{}\)2-d;\\
   K := NumberField(P);\\
   O := MaximalOrder(K);\\
   C,mC := ClassGroup(O);\-\\
end if;
\end{tabbing}
}
\noindent
\textbf{Output:}
The conditional class group \((C,mC)\) of the quadratic field \(K\), assuming the GRH.
\end{algorithm}

\begin{remark}
\label{rmk:BasisField}
By using the statement \texttt{K := QuadraticField(d);}
the quadratic field \(K=\mathbb{Q}(\sqrt{d})\) is constructed directly.
However, the construction by means of a polynomial \(P(X)\in\mathbb{Z}\lbrack X\rbrack\) executes faster
and can easily be generalized to base fields \(K\) of higher degree.
\end{remark}


For the next algorithm it is important to know that
in the MAGMA computational algebra system
\cite{MAGMA},
the composition \(A\times A\to A\), \((x,y)\mapsto x+y\),
of an abelian group \(A\) is written \textit{additively}, and
abelian type invariants \((a_1,\ldots,a_n)\) of a finite abelian group \(A\)
are arranged in \textit{non-decreasing} order \(a_1\le\ldots\le a_n\).


Given the situation in Proposition
\ref{prp:pElementarySubgroup},
where \(A\) is a finite abelian group having \(p\)-rank \(r_p(A)=2\),
Algorithm
\ref{alg:NaturalOrdering}
defines a natural ordering
on the subgroups \(S\) of \(A\) of index \((A:S)=p\) by means of Proposition
\ref{prp:OrderPSubgroups},
if the Sylow \(p\)-subgroup \(\mathrm{Syl}_p{A}\) is of type \((p,p)\).

\begin{algorithm}
\label{alg:NaturalOrdering}
(Natural ordering of subgroups of index \(p\))\\
\textbf{Input:}
A prime number \(p\) and a finite abelian group \(A\) with \(p\)-rank \(r_p(A)=2\).\\
\textbf{Code:}
\texttt{
\begin{tabbing}
for \= for \= for \= \kill
if (2 eq \(\#\)pPrimaryInvariants(A,p)) then\+\\
   n := Ngens(A);\\
   x := (Order(A.(n-1)) div p)*A.(n-1);\\
   y := (Order(A.n) div p)*A.n;\\
   seqS := Subgroups(A: Quot:=[p]);\\
   seqI := [\({}\,{}\)];\\
   for i in [1..p+1] do Append(\(\sim\)seqI,0); end for;\\
   NonCyc := 0;\\
   Cyc := 0;\\
   i := 0;\\
   for S in seqS do\+\\
      i := i+1;\\
      Pool := [\({}\,{}\)];\\
      if y in S\({}\,\grave{}\,{}\)subgroup then Append(\(\sim\)Pool,1); seqI[1] := i; end if;\\
      for e in [0..p-1] do\+\\
         if x+e*y in S\({}\,\grave{}\,{}\)subgroup then Append(\(\sim\)Pool,e+2); seqI[e+2] := i; end if;\-\\
      end for;\\
      if (2 le \(\#\)Pool) then\+\\
         NonCyc := ct;\-\\
      else\+\\
         Cyc := Pool[1];\-\\
      end if;\-\\
   end for;\\
   if (0 lt NonCyc) then\+\\
      for i in [1..p+1] do seqI[i] := i; end for;\-\\
   end if;\-\\
end if;
\end{tabbing}
}
\noindent
\textbf{Output:}
Generators \(x,y\) of the \(p\)-elementary subgroup \(A_p\) of \(A\),
two indicators, \texttt{NonCyc} for one or more non-cyclic maximal subgroups of \(\mathrm{Syl}_p{A}\),
\texttt{Cyc} for one or more cyclic maximal subgroups of \(\mathrm{Syl}_p{A}\),
an ordered sequence \texttt{seqS} of the \(p+1\) subgroups of \(A\) of index \(p\),
and, if there are only cyclic maximal subgroups of \(\mathrm{Syl}_p{A}\),
an ordered sequence \texttt{seqI} of numerical identifiers for the elements \(S\) of \texttt{seqS}.
\end{algorithm}

\begin{proof}
This is precisely the implementation of the Propositions
\ref{prp:pElementarySubgroup},
\ref{prp:OrderPSubgroups}
and
\ref{prp:IndexPandOrderP}
in MAGMA
\cite{MAGMA}.
\end{proof}

\begin{remark}
\label{rmk:HilbertClassField}
The modified statement \texttt{seqS := Subgroups(A: Quot:=[p,p]);}
yields the biggest subgroup of \(A\) of order coprime to \(p\),
and can be used for constructing the Hilbert \(p\)-class field \(\mathrm{F}_p^1{K}\)
of the base field \(K\) in Algorithm
\ref{alg:UnramifiedExtensions},
if the \(p\)-class group \(\mathrm{Cl}_p{K}\) is of type \((p,p)\).
\end{remark}


The class group \((C,mC)\) in the output of Algorithm
\ref{alg:BasisField}
is used as input for Algorithm
\ref{alg:NaturalOrdering}.
The resulting sequence \texttt{seqS} of all subgroups of index \(p\) in \(C\),
together with the pair \((C,mC)\),
forms the input of Algorithm
\ref{alg:UnramifiedExtensions},
which determines all unramified cyclic extensions \(L_i\vert K\) of relative degree \(p\)
using the Artin correspondence as described by Fieker
\cite{Fi}.

\begin{algorithm}
\label{alg:UnramifiedExtensions}
(Construction of all unramified cyclic extensions of degree \(p\))\\
\textbf{Input:}
The class group \((C,mC)\) of a base field \(K\)
and the ordered sequence \texttt{seqS} of all subgroups \(S\) of index \(p\) in \(C\).\\
\textbf{Code:}
\texttt{
\begin{tabbing}
         seqAblExt := [\= \kill
         seqAblExt := [AbelianExtension(Inverse(mQ)*mC)\+\\
                       where Q,mQ := quo<C|S\({}\,\grave{}\,{}\)subgroup>: S in seqS];\-\\
         seqRelFld := [NumberField(ae): ae in seqAblExt];\\
         seqRelOrd := [MaximalOrder(ae): ae in seqAblExt];\\
         seqAbsFld := [AbsoluteField(rf): rf in seqRelFld];\\
         seqAbsOrd := [MaximalOrder(af): af in seqAbsFld];\\
         seqOptRep := [OptimizedRepresentation(af): af in seqAbsFld];\\
         seqOptAbsFld := [NumberField(DefiningPolynomial(opt)): opt in seqOptRep];\\
         seqOptAbsOrd := [Simplify(LLL(MaximalOrder(oaf))): oaf in seqOptAbsFld];
\end{tabbing}
}
\noindent
\textbf{Output:}
Three ordered sequences,
\texttt{seqRelOrd} of the relative maximal orders of \(L_i\vert K\),
\texttt{seqAbsOrd} of the corresponding absolute maximal orders of \(L_i\vert\mathbb{Q}\), and
\texttt{seqOptAbsOrd} of optimized representations for the latter.
\end{algorithm}

\begin{remark}
\label{rmk:UnramifiedExtensions}
Algorithm
\ref{alg:UnramifiedExtensions}
is independent of the \(p\)-class rank \(\varrho\) of the base field \(K\).
In order to obtain the adequate coercion of ideals,
the sequence \texttt{seqRelOrd} must be used for computing the transfer kernel type \(\varkappa(K)\)
in Algorithm
\ref{alg:TKT}.
The trailing three lines of Algorithm
\ref{alg:UnramifiedExtensions}
are optional but highly recommended,
since the size of all arithmetical invariants,
such as polynomial coefficients,
is reduced considerably.
Either the sequence \texttt{seqAbsOrd} or rather the sequence \texttt{seqOptAbsOrd}
should be used for calculating the transfer target type \(\tau(K)\) in Algorithm
\ref{alg:TTT}.
\end{remark}


\begin{algorithm}
\label{alg:TKT}
(Transfer kernel type, \(\varkappa(K)\))\\
\textbf{Input:}
The prime number \(p\), the ordered sequence
\texttt{seqRelOrd} of the relative maximal orders of \(L_i\vert K\),
the class group mapping \(mC\) of the base field \(K\) with \(p\)-class rank \(\varrho=2\),
the generators \(x,y\) of the \(p\)-elementary class group \(E_p\) of \(K\),
and the ordered sequence \texttt{seqI} of numerical identifiers 
for the \(p+1\) subgroups \(S\) of index \(p\) in the class group \(C\) of \(K\).\\
\textbf{Code:}
\texttt{
\begin{tabbing}
         for \= for \= \kill
         TKT := [\({}\,{}\)];\\
         for i in [1..\(\#\)seqRelOrd] do\+\\
            Collector := [\({}\,{}\)];\\
            I := seqRelOrd[i]!!mC(y);\\
            if IsPrincipal(I) then Append(\(\sim\)Collector,seqI[1]); end if;\\
            for e in [0..p-1] do\+\\
               I := seqRelOrd[i]!!mC(x+e*y);\\
               if IsPrincipal(I) then Append(\(\sim\)Collector,seqI[e+2]); end if;\-\\
            end for;\\
            if (2 le \(\#\)Collector) then\+\\
               Append(\(\sim\)TKT,0);\-\\
            else\+\\
               Append(\(\sim\)TKT,Collector[1]);\-\\
            end if;\-\\
         end for;
\end{tabbing}
}
\noindent
\textbf{Output:}
The transfer kernel type TKT of \(K\).
\end{algorithm}

\begin{remark}
\label{rmk:TKT}
In \(2012\), Bembom investigated the \(5\)-capitulation over
complex quadratic fields \(K\) with \(5\)-class group of type \((5,5)\)
\cite[p. 129]{Bb}.
However, his techniques were only able to distinguish between
permutation types and nearly constant types,
since he did not use the crucial sequence of numerical identifiers.
We refined his results in
\cite[\S\ 3.5, 445--451]{Ma4}
by determining the cycle decomposition and,
in particular, the fixed points of the permutation types,
which admitted the solution of an old problem by Taussky
\cite[\S\ 3.5.2, p. 448]{Ma4}.
\end{remark}


\begin{algorithm}
\label{alg:TTT}
(Transfer target type, \(\tau(K)\))\\
\textbf{Input:}
The prime number \(p\) and the ordered sequence
\texttt{seqOptAbsOrd} of the optimized absolute maximal orders of \(L_i\vert\mathbb{Q}\).\\
\textbf{Code:}
\texttt{
\begin{tabbing}
         for \= \kill
         SetClassGroupBounds("GRH");\\
         TTT := [\({}\,{}\)];\\
         for i in [1..\(\#\)seqOptAbsOrd] do\+\\
            CO := ClassGroup(seqOptAbsOrd[i]);\\
            Append(\(\sim\)TTT,pPrimaryInvariants(CO,p));\-\\
         end for;
\end{tabbing}
}
\noindent
\textbf{Output:}
The conditional transfer target type TTT of \(K\), assuming the GRH.
\end{algorithm}

With Algorithms
\ref{alg:TKT}
and
\ref{alg:TTT}
we are in the position to determine the Artin pattern
\(\mathrm{AP}(K)=(\tau(K),\varkappa(K))\) of the field \(K\).
For pointing out fixed points of the transfer kernel type \(\varkappa(K)\)
it is useful to define a corresponding \textit{weak} TKT \(\kappa=\kappa(K)\)
which collects the Taussky conditions A, resp. B, of Theorem
\ref{thm:TausskyAB},
for each extension \(L_i\vert K\):

\begin{equation}
\label{eqn:TAB}
\kappa_i:=
\begin{cases}
\mathrm{A} & \text{ if } \ker{j_{L_i\vert K}}\bigcap\mathrm{Norm}_{L_i\vert K}\mathrm{Cl}_p{L_i}>1, \\
\mathrm{B} & \text{ if } \ker{j_{L_i\vert K}}\bigcap\mathrm{Norm}_{L_i\vert K}\mathrm{Cl}_p{L_i}=1.
\end{cases}
\end{equation}


\begin{algorithm}
\label{alg:TAB}
(Weak transfer kernel type, \(\kappa(K)\), containing Taussky's conditions A, resp. B)\\
\textbf{Input:}
The indicators \texttt{NonCyc}, \texttt{Cyc}, and the TKT.\\
\textbf{Code:}
\texttt{
\begin{tabbing}
         for \= for \= for \= for \= \kill
         TAB := [\({}\,{}\)];\\
         if (0 lt NonCyc) then\+\\
            if (1 eq NonCyc) then\+\\
               for i in [1..\(\#\)TKT] do\+\\
                  if ((Cyc eq TKT[i]) and not (NonCyc eq i))\\
                  or (NonCyc eq i) or (0 eq TKT[i]) then\+\\
                     Append(\(\sim\)TAB,"A");\-\\
                  else\+\\
                     Append(\(\sim\)TAB,"B");\-\\
                  end if;\-\\
               end for;\-\\
            else\+\\
               for i in [1..\(\#\)TKT] do Append(\(\sim\)TAB,"A"); end for;\-\\
            end if;\-\\
         else\+\\
            for i in [1..\(\#\)TKT] do\+\\
               if (i eq TKT[i]) or (0 eq TKT[i]) then\+\\
                  Append(\(\sim\)TAB,"A");\-\\
               else\+\\
                  Append(\(\sim\)TAB,"B");\-\\
               end if;\-\\
            end for;\-\\
         end if;
\end{tabbing}
}
\noindent
\textbf{Output:}
The weak transfer kernel type TAB of \(K\).
\end{algorithm}

\begin{proof}
This is the implementation of Theorem
\ref{thm:TausskyAB}
in MAGMA
\cite{MAGMA}.
\end{proof}



\section{Interpretation of numerical results}
\label{s:IntprNumerRslt}

By means of the algorithms in \S\
\ref{s:ComputTechn},
we have computed the Artin pattern \(\mathrm{AP}(K)=(\tau(K),\varkappa(K))\)
of all \(34\,631\) real quadratic fields \(K=\mathbb{Q}(\sqrt{d})\)
with \(\mathrm{Cl}_3{K}\simeq (3,3)\)
in the range \(0<d<10^8\) of fundamental discriminants.
The results are presented in the following four tables,
arranged by the coclass \(\mathrm{cc}(\mathfrak{G})\) of the second \(3\)-class group \(\mathfrak{G}=\mathrm{G}_3^2{K}\).
Each table gives
the type designation, distinguishing ground states and excited states \((\uparrow,\uparrow^2,\ldots)\),
the transfer kernel type \(\varkappa=\varkappa(K)\),
the transfer target type \(\tau=\tau(K)\),
the absolute frequency AF,
the relative frequency RF, that is the percentage with respect to the total number of occurrences of the fixed coclass,
and the minimal discriminant MD
\cite[Dfn. 5.1]{Ma11}.
Additionally to this experimental information,
we have identified the group \(\mathfrak{G}\) by means of the
strategy of pattern recognition via Artin transfers
\cite[\S\ 4]{Ma12},
and computed the factorized order of its automorphism group \(\mathrm{Aut}(\mathfrak{G})\)
and its relation rank \(d_2(\mathfrak{G}):=\dim_{\mathbb{F}_p}\mathrm{H}^2(\mathfrak{G},\mathbb{F}_p)\).
Groups are specified by their names in the SmallGroups Library
\cite{BEO1,BEO2}.
The nilpotency class \(c=\mathrm{cl}(\mathfrak{G})\) and coclass \(r=\mathrm{cc}(\mathfrak{G})\)
were determined by means of
\cite[Thm. 3.1, p. 290, and Thm. 3.2, p. 291]{Ma7},
resp.
\cite[Thm. 3.1]{Ma11}.



\subsection{Groups \(\mathfrak{G}\) of coclass \(\mathrm{cc}(\mathfrak{G})=1\)}
\label{ss:StatisticsCc1}

The \(31\,088\) fields
whose second \(3\)-class group \(\mathfrak{G}\) is of maximal class, i.e. of coclass \(\mathrm{cc}(\mathfrak{G})=1\),
constitute a contribution of \(89.77\%\), which is dominating by far.
This confirms the tendency which was recogized for the restricted range \(0<d<10^7\) already,
where we had  \(\frac{2\,303}{2\,576}\approx 89.4\%\) in
\cite[Tbl. 2, p. 496]{Ma1}
and
\cite[Tbl. 6.1, p. 451]{Ma3}.
However, there is a slight \textit{increase} of \(0.37\%\) for the relative frequency of \(\mathrm{cc}(\mathfrak{G})=1\)
in the extended range.


\begin{theorem}
\label{thm:Cc1}
(Coclass \(1\))
The Hilbert \(3\)-class field tower
of a real quadratic field \(K\) whose second \(3\)-class group
\(\mathfrak{G}=\mathrm{Gal}(\mathrm{F}_3^2{K}\vert K)\)
is of coclass \(\mathrm{cc}(\mathfrak{G})=1\)
has exact length \(\ell_3{K}=2\), that is,
the \(3\)-class tower group \(G=\mathrm{Gal}(\mathrm{F}_3^\infty{K}\vert K)\)
is isomorphic to \(\mathfrak{G}\), and
\(K<\mathrm{F}_3^1{K}<\mathrm{F}_3^2{K}=\mathrm{F}_3^\infty{K}\).
\end{theorem}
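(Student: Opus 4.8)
The plan is to reduce the whole assertion to the single claim that the second derived subgroup \(G''\) of the \(3\)-tower group \(G=\mathrm{Gal}(\mathrm{F}_3^\infty{K}\vert K)\) is trivial. Indeed, the field \(\mathrm{F}_3^2{K}\) is the fixed field of \(G''\), so \(\mathfrak{G}=\mathrm{Gal}(\mathrm{F}_3^2{K}\vert K)\cong G/G''\) is the maximal metabelian quotient of \(G\); once \(G''=1\) is established, \(G\cong\mathfrak{G}\) and therefore \(\mathrm{F}_3^\infty{K}=\mathrm{F}_3^2{K}\). First I would record the two facts available at the outset: since \(\mathrm{Cl}_3{K}\simeq(3,3)\), the generator rank is \(d_1(G)=\dim_{\mathbb{F}_3}\mathrm{E}_3{K}=\varrho=2\) and \(G^{\mathrm{ab}}\simeq(3,3)\); and by hypothesis \(\mathfrak{G}\) is a finite \(3\)-group of maximal class (coclass one) with abelianization \((3,3)\), hence of order \(3^n\) with \(n\ge 3\) and nilpotency class \(n-1\), in particular non-abelian.

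The mechanism I would use is the five-term exact sequence of the group extension \(1\to G''\to G\to\mathfrak{G}\to 1\) in \(\mathbb{F}_3\)-homology. Because \(G''\le G'\), the induced map \(G^{\mathrm{ab}}\to\mathfrak{G}^{\mathrm{ab}}\) is an isomorphism, so the sequence collapses to the exact
\begin{equation*}
\mathrm{H}_2(G,\mathbb{F}_3)\longrightarrow\mathrm{H}_2(\mathfrak{G},\mathbb{F}_3)\longrightarrow G''/[G,G''](G'')^3\longrightarrow 0 .
\end{equation*}
Thus \(G''/[G,G''](G'')^3\) is precisely the cokernel of the inflation map on second homology, and a topological Nakayama argument (the augmentation ideal of \(\mathbb{F}_3[\mathfrak{G}]\) acts nilpotently on the compact \(\mathbb{F}_3[\mathfrak{G}]\)-module \(G''/\Phi(G'')\), since \(\mathfrak{G}\) is a finite \(3\)-group) shows that \(G''/[G,G''](G'')^3=0\) already forces \(G''=1\). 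Hence the entire theorem is equivalent to the surjectivity of \(\mathrm{H}_2(G,\mathbb{F}_3)\to\mathrm{H}_2(\mathfrak{G},\mathbb{F}_3)\).

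To control the source I would invoke the Shafarevich--Koch estimate for the relation rank of a class tower group: for the real quadratic base field \(K\) (torsion-free unit rank one and \(\mu_3\not\subset K\)) one has \(d_2(G)=\dim_{\mathbb{F}_3}\mathrm{H}^2(G,\mathbb{F}_3)\le d_1(G)+1=3\). The decisive step is then to match this bound against the coclass-one structure of \(\mathfrak{G}\): the metabelian \(3\)-groups of maximal class with abelianization \((3,3)\) are Schur \(\sigma\)-groups whose relation rank is already accounted for within the bound \(3\), so that no non-metabelian pro-\(3\) cover \(G\) of \(\mathfrak{G}\) can satisfy \(d_2(G)\le 3\); equivalently, the inflation map above is forced to be onto, i.e. such \(\mathfrak{G}\) are terminal on their coclass tree in the sense relevant here. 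This is exactly the input I would extract from the class and coclass identification of \(\mathfrak{G}\) recorded before the theorem. I expect this comparison — ruling out every proper, non-metabelian cover of a maximal-class \(\mathfrak{G}\) under the relation-rank constraint — to be the genuine obstacle, as it is the point where the arithmetic of \(K\) (through the Shafarevich bound) and the group theory of the coclass-one tree must be reconciled.

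Finally, granting \(G''=1\), I would pin down the exact length. From \(G\cong\mathfrak{G}\) one gets \(\mathrm{F}_3^\infty{K}=\mathrm{F}_3^2{K}\), so \(\ell_3{K}\le 2\); since \(\mathrm{Cl}_3{K}\simeq(3,3)\ne 1\) we have \(\mathrm{F}_3^1{K}\ne K\), and since \(\mathfrak{G}\) is non-abelian its commutator subgroup \(\mathfrak{G}'=\mathrm{Gal}(\mathrm{F}_3^2{K}\vert\mathrm{F}_3^1{K})\cong\mathrm{Cl}_3(\mathrm{F}_3^1{K})\) is non-trivial, whence \(\mathrm{F}_3^2{K}\ne\mathrm{F}_3^1{K}\). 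Therefore the tower strictly increases through exactly two steps, \(K<\mathrm{F}_3^1{K}<\mathrm{F}_3^2{K}=\mathrm{F}_3^\infty{K}\), giving \(\ell_3{K}=2\).
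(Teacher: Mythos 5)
Your overall framework is the right one and is essentially what underlies the result the paper cites: reduce the theorem to \(G''=1\), obtain from the five-term exact sequence of \(1\to G''\to G\to\mathfrak{G}\to 1\) that \(G''/\lbrack G,G''\rbrack (G'')^3\) is the cokernel of inflation on \(\mathrm{H}_2(-,\mathbb{F}_3)\), kill \(G''\) by topological Nakayama, and bring in the Shafarevich--Koch bound \(d_2(G)\le d_1(G)+1=3\) for a real quadratic base field with \(\mu_3\not\subset K\). All of that is correct. The final deduction of \(\ell_3{K}=2\) from \(G''=1\) is also fine. (The paper itself gives no argument at all; it simply cites Theorem 5.3 of \cite{Ma11}.)

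The gap is exactly where you suspect it to be, and the justification you offer there is wrong. You assert that the metabelian \(3\)-groups of maximal class with abelianization \((3,3)\) are Schur \(\sigma\)-groups. They are not: a Schur \(\sigma\)-group on two generators has \(d_2=2\), whereas the paper's Table \ref{tbl:StatisticsCc1} records \(d_2(\mathfrak{G})=3\) for every coclass-\(1\) group occurring, and the paper reserves the term ``Schur \(\sigma\)-group'' for the coclass-\(2\) groups \(\langle 3^5,5\rangle\) and \(\langle 3^5,7\rangle\) in Theorem \ref{thm:SectionD}. More importantly, even the correct statement \(d_2(\mathfrak{G})=3\) together with \(d_2(G)\le 3\) does not force the inflation map \(\mathrm{H}_2(G,\mathbb{F}_3)\to\mathrm{H}_2(\mathfrak{G},\mathbb{F}_3)\) to be surjective: a map between spaces of dimensions \(\le 3\) and \(3\) can fail to be onto. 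What is actually needed is the group-theoretic fact that every \emph{non-metabelian} pro-\(3\) group \(G\) with \(G/G''\simeq\mathfrak{G}\) of coclass \(1\) and \(G^{\mathrm{ab}}\simeq(3,3)\) has relation rank \(d_2(G)\ge 4\) --- equivalently, that the cover of \(\mathfrak{G}\) is the singleton \(\lbrace\mathfrak{G}\rbrace\). That statement is the real content of the cited Theorem 5.3 of \cite{Ma11}, and it is established there by an examination of the descendant trees of the coclass-\(1\) groups (via the \(p\)-group generation algorithm and the structure theory of \(3\)-groups of maximal class), not by the relation-rank count alone. As written, your ``decisive step'' is asserted rather than proved, so the argument does not yet close.
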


\begin{proof}
This is Theorem 5.3 in
\cite{Ma11}.
\end{proof}

In Table
\ref{tbl:StatisticsCc1},
we denote two crucial mainline vertices of the unique coclass-\(1\) tree \(\mathcal{T}^1(\langle 3^2,2\rangle)\) by
\(M_7:=\langle 3^7,386\rangle\) and \(M_9:=M_7(-\#1;1)^2\),
and we give the results for \(\mathrm{cc}(\mathfrak{G})=1\).



\renewcommand{\arraystretch}{1.1}

\begin{table}[ht]
\caption{Statistics of \(3\)-capitulation types \(\varkappa=\varkappa(K)\) of fields \(K\) with \(\mathrm{cc}(\mathfrak{G})=1\)}
\label{tbl:StatisticsCc1}
\begin{center}
\begin{tabular}{|c|c|c||r|r|r||c|c|r|}
\hline
              Type & \(\varkappa\) &        \(\tau\) &          AF &          RF &               MD &     \(\mathfrak{G}=\mathrm{G}_3^2\) & \(\#\mathrm{Aut}\) & \(d_2\) \\
\hline
           a.\(1\) &      \(0000\) & \(2^2,(1^2)^3\) &  \(2\,180\) &  \(7.01\%\) &      \(62\,501\) & \(\langle 3^6,99\ldots 101\rangle\) &         \(2^13^8\) &   \(3\) \\
           a.\(2\) &      \(1000\) &  \(21,(1^2)^3\) &  \(7\,104\) & \(22.85\%\) &      \(72\,329\) &           \(\langle 3^4,10\rangle\) &         \(2^13^5\) &   \(3\) \\
           a.\(3\) &      \(2000\) &  \(21,(1^2)^3\) & \(10\,514\) & \(33.82\%\) &      \(32\,009\) &            \(\langle 3^4,8\rangle\) &         \(2^23^4\) &   \(3\) \\
      a.\(3^\ast\) &      \(2000\) & \(1^3,(1^2)^3\) & \(10\,244\) & \(32.95\%\) &     \(142\,097\) &            \(\langle 3^4,7\rangle\) &         \(2^23^4\) &   \(3\) \\
\hline
   a.\(1\uparrow\) &      \(0000\) & \(3^2,(1^2)^3\) &      \(58\) &  \(0.19\%\) &  \(2\,905\,160\) &               \(M_7-\#1;5\ldots 7\) &      \(2^13^{12}\) &   \(3\) \\
   a.\(2\uparrow\) &      \(1000\) &  \(32,(1^2)^3\) &     \(242\) &  \(0.78\%\) &     \(790\,085\) &           \(\langle 3^6,96\rangle\) &         \(2^13^9\) &   \(3\) \\
   a.\(3\uparrow\) &      \(2000\) &  \(32,(1^2)^3\) &     \(713\) &  \(2.29\%\) &     \(494\,236\) &   \(\langle 3^6,97\vert 98\rangle\) &         \(2^23^8\) &   \(3\) \\
\hline
 a.\(1\uparrow^2\) &      \(0000\) & \(4^2,(1^2)^3\) &       \(3\) &        \(\) & \(40\,980\,808\) &               \(M_9-\#1;5\ldots 7\) &      \(2^13^{16}\) &   \(3\) \\
 a.\(2\uparrow^2\) &      \(1000\) &  \(43,(1^2)^3\) &       \(9\) &  \(0.03\%\) & \(25\,714\,984\) &                       \(M_7-\#1;2\) &      \(2^13^{13}\) &   \(3\) \\
 a.\(3\uparrow^2\) &      \(2000\) &  \(43,(1^2)^3\) &      \(20\) &  \(0.06\%\) & \(10\,200\,108\) &                \(M_7-\#1;3\vert 4\) &      \(2^23^{12}\) &   \(3\) \\
\hline
 a.\(2\uparrow^3\) &      \(1000\) &  \(54,(1^2)^3\) &       \(1\) &        \(\) & \(37\,304\,664\) &                       \(M_9-\#1;2\) &      \(2^13^{17}\) &   \(3\) \\
\hline
\multicolumn{3}{|c||}{Total of \(\mathrm{cc}(\mathfrak{G})=1\)} & \(31\,088\) & \multicolumn{5}{|c|}{\(89.77\%\) with respect to \(34\,631\)} \\
\hline
\end{tabular}
\end{center}
\end{table}


The large scale separation of the types a.\(2\) and a.\(3\), resp. a.\(2\uparrow\) and a.\(3\uparrow\),
in Table
\ref{tbl:StatisticsCc1}
became possible for the first time by our new algorithm. It refines the results in
\cite[Tbl. 2, p. 496]{Ma1}
and
\cite[Tbl. 6.1, p. 451]{Ma3},
and consequently also the frequency distribution in
\cite[Fig. 3.2, p. 422]{Ma4}.

Inspired by Boston, Bush and Hajir's theory of the
statistical distribution of \(p\)-class tower groups of complex quadratic fields
\cite{BBH},
we expect that, in Table
\ref{tbl:StatisticsCc1}
and in view of Theorem
\ref{thm:Cc1},
the asymptotic limit of the relative frequency RF of
realizations of a particular group \(\mathfrak{G}=\mathrm{G}_3^2{K}\simeq G=\mathrm{G}_3^\infty{K}\)
is proportional to the reciprocal of the order \(\#\mathrm{Aut}(\mathfrak{G})\) of its automorphism group.
In particular, we state the following conjecture about three dominating types,
a.\(3^\ast\), a.\(3\) and a.\(2\).

\begin{conjecture}
\label{cnj:StatisticsCc1}
For a sufficiently extensive range \(0<d<B\) of fundamental discriminants,
both, the absolute and relative frequencies
of realizations of the groups \(\langle 3^4,7\rangle\), \(\langle 3^4,8\rangle\) and \(\langle 3^4,10\rangle\),
resp. \(\langle 3^6,97\rangle\), \(\langle 3^6,98\rangle\) and \(\langle 3^6,96\rangle\),
as \(3\)-class tower groups \(\mathrm{G}_3^\infty{K}=\mathrm{G}_3^2{K}\)
of real quadratic fields \(K=\mathbb{Q}(\sqrt{d})\)
satisfy the proportion \(3:3:2\).
\end{conjecture}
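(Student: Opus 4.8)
The plan is to separate the statement into an elementary combinatorial core and a deep analytic heuristic, with the latter being the genuine difficulty. First I would note that, by Theorem \ref{thm:Cc1}, every field $K$ contributing to Table \ref{tbl:StatisticsCc1} has $G = \mathrm{G}_3^\infty K = \mathrm{G}_3^2 K = \mathfrak{G}$, so that counting realizations of the tower group $G$ is identical to counting realizations of the second $3$-class group $\mathfrak{G}$. Granting the heuristic principle stated just before the conjecture — that the asymptotic relative frequency of a fixed $\mathfrak{G}$ is proportional to $1/\#\mathrm{Aut}(\mathfrak{G})$ — the proportion $3:3:2$ becomes a purely arithmetical consequence of the automorphism orders recorded in Table \ref{tbl:StatisticsCc1}. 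For the ground states one reads off $\#\mathrm{Aut}\langle 3^4,7\rangle = \#\mathrm{Aut}\langle 3^4,8\rangle = 2^2 3^4$ and $\#\mathrm{Aut}\langle 3^4,10\rangle = 2^1 3^5$, whence
\begin{equation*}
\frac{1}{2^2 3^4} : \frac{1}{2^2 3^4} : \frac{1}{2^1 3^5} = 3 : 3 : 2
\end{equation*}
after multiplying through by $2^2 3^5$; the identical computation for the excited states, using $\#\mathrm{Aut}\langle 3^6,97\rangle = \#\mathrm{Aut}\langle 3^6,98\rangle = 2^2 3^8$ and $\#\mathrm{Aut}\langle 3^6,96\rangle = 2^1 3^9$, again yields $3:3:2$.

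The remaining and decisive task is to establish the weighting principle itself, that is, to prove a Cohen--Lenstra / Boston--Bush--Hajir \cite{BBH} type density theorem for the $3$-class tower groups of real quadratic fields. The plan here is to set up a probabilistic model in which $G = \mathrm{Gal}(\mathrm{F}_3^\infty K \vert K)$ is realized as a random quotient of the free pro-$3$ group on two generators (reflecting $\mathrm{Cl}_3 K \simeq (3,3)$, hence generator rank $d_1 = 2$) by a random system of relations, and to show that this model assigns to each admissible finite $3$-group $G$ a mass proportional to $1/\#\mathrm{Aut}(G)$. A crucial structural input is that all groups in question share the relation rank $d_2 = 3$; since $d_2 = d_1 + 1$ here, this is exactly the Shafarevich-maximal case, and the number of random relations in the model must be chosen accordingly.

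The hard part will be adapting the Boston--Bush--Hajir formalism, which was devised for \emph{imaginary} quadratic fields, to the present \emph{real} situation. The essential difference is the rank-one unit group of $K$: it shifts the underlying Shafarevich bound and forces the extra relation responsible for $d_2 = d_1 + 1$, so the random-group measure and its weight must be recalibrated to the totally real signature. Concretely, I would (i) fix the correct measure on the space of such random pro-$3$ presentations, (ii) compute the induced mass of each individual group and verify that it equals $c \cdot 1/\#\mathrm{Aut}(G)$ for a single normalizing constant $c$, and (iii) match the model to arithmetic through an equidistribution theorem for the Artin pattern $\mathrm{AP}(K)$ as $d$ ranges over fundamental discriminants. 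Steps (i) and (ii) are purely group-theoretic and are likely tractable; step (iii), the arithmetic equidistribution, is the real obstruction, since no unconditional density theorem of this strength is presently available for real quadratic fields — which is precisely why the statement is formulated as a conjecture rather than a theorem.
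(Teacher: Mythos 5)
Your arithmetic core coincides exactly with the paper's own justification: the paper's ``proof'' is explicitly labelled an \emph{attempt of an explanation}, and it consists of precisely the reciprocal-automorphism-order computation you give, namely \(\#\mathrm{Aut}(\langle 3^4,7\rangle)^{-1}=\#\mathrm{Aut}(\langle 3^4,8\rangle)^{-1}=\tfrac{1}{2^23^4}=\tfrac{3}{2}\cdot\#\mathrm{Aut}(\langle 3^4,10\rangle)^{-1}\), together with its analogue for \(\langle 3^6,96\vert 97\vert 98\rangle\), which yields \(3:3:2\). Where you differ is in what surrounds that computation. The paper makes no attempt to derive the \(1/\#\mathrm{Aut}\) weighting principle; instead it corroborates the predicted proportion against the computed data for \(B=10^8\) (\(10\,244\approx 10\,514\approx\tfrac{3}{2}\cdot 7\,104\), resp. \(32.95\%\approx 33.82\%\approx\tfrac{3}{2}\cdot 22.85\%\)), and it records a subtlety you omit: for the first excited states no arithmetical invariant is known that separates \(\langle 3^6,97\rangle\) from \(\langle 3^6,98\rangle\), so only the cumulative ratio \(713\approx 3\cdot 242\) (factor \(2\cdot\tfrac{3}{2}=3\)) can actually be checked. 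Your additional program --- recalibrating the Boston--Bush--Hajir random pro-\(3\) presentation model to the totally real signature with \(d_2=d_1+1\) and proving an equidistribution theorem for Artin patterns of real quadratic fields --- goes beyond anything attempted in the paper, and you correctly identify the arithmetic equidistribution step as the genuine obstruction and the reason the statement is only a conjecture. Both treatments deliver the same, purely heuristic, level of support; yours makes the missing theoretical ingredient explicit, while the paper's buys empirical corroboration from the \(0<d<10^8\) computation.
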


\begin{proof}
(Attempt of an explanation)
A heuristic justification of the conjecture is given for the ground states by the relation for reciprocal orders
\[\#\mathrm{Aut}(\langle 3^4,7\rangle)^{-1}=\#\mathrm{Aut}(\langle 3^4,8\rangle)^{-1}=\frac{1}{2^23^4}
=\frac{3}{2}\cdot\frac{1}{2^13^5}=\frac{3}{2}\cdot\#\mathrm{Aut}(\langle 3^4,10\rangle)^{-1},\]
which is nearly fulfilled by
\(10\,244\approx 10\,514\approx\frac{3}{2}\cdot 7\,104\), resp. \(32.95\%\approx 33.82\%\approx\frac{3}{2}\cdot 22.85\%\),
for the bound \(B=10^8\), and disproves our oversimplified conjectures at the end of
\cite[Rmk. 5.2]{Ma12}.\\
For the first excited states, we have the reciprocal orders
\[\#\mathrm{Aut}(\langle 3^6,97\rangle)^{-1}=\#\mathrm{Aut}(\langle 3^6,98\rangle)^{-1}=\frac{1}{2^23^8}
=\frac{3}{2}\cdot\frac{1}{2^13^9}=\frac{3}{2}\cdot\#\mathrm{Aut}(\langle 3^6,96\rangle)^{-1},\]
but here no arithmetical invariants are known for distinguishing between \(\langle 3^6,97\rangle\) and \(\langle 3^6,98\rangle\),
whence we have \(713\approx 3\cdot 242\), resp. \(2.29\%\approx 3\cdot 0.78\%\), with cumulative factor \(2\cdot\frac{3}{2}=3\).
\end{proof}



\subsection{Groups \(\mathfrak{G}\) of coclass \(\mathrm{cc}(\mathfrak{G})=2\)}
\label{ss:StatisticsCc2}

The \(3\,328\) fields
whose second \(3\)-class group \(\mathfrak{G}\) is of second maximal class, i.e. of coclass \(\mathrm{cc}(\mathfrak{G})=2\),
constitute a moderate contribution of \(9.61\%\).
The corresponding relative frequency for the restricted range \(0<d<10^7\)
is \(\frac{260}{2\,576}\approx 10.1\%\), which can be figured out from
\cite[Tbl. 4--5, pp. 498--499]{Ma1}
or, more easily, from
\cite[Tbl. 6.3, Tbl. 6.5, Tbl. 6.7, pp. 452-453]{Ma3}.
So there is a slight \textit{decrease} of \(0.49\%\) for the relative frequency of \(\mathrm{cc}(\mathfrak{G})=2\)
in the extended range.


\begin{theorem}
\label{thm:SectionD}
(Section \(\mathrm{D}\))
The Hilbert \(3\)-class field tower
of a real quadratic field \(K\) whose second \(3\)-class group
\(\mathfrak{G}=\mathrm{Gal}(\mathrm{F}_3^2{K}\vert K)\)
is isomorphic to either of the two Schur \(\sigma\)-groups
\(\langle 3^5,5\rangle\) or \(\langle 3^5,7\rangle\)
has exact length \(\ell_3{K}=2\), that is,
the \(3\)-class tower group \(G=\mathrm{Gal}(\mathrm{F}_3^\infty{K}\vert K)\)
is isomorphic to \(\mathfrak{G}\), and
\(K<\mathrm{F}_3^1{K}<\mathrm{F}_3^2{K}=\mathrm{F}_3^\infty{K}\).
\end{theorem}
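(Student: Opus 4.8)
The plan is to reduce the theorem to the single assertion $G'' = 1$, since the Hilbert $3$-class field tower has length $\ell_3 K = 2$ exactly when the pro-$3$ tower group $G = \mathrm{Gal}(\mathrm{F}_3^\infty K \vert K)$ agrees with its metabelianization $\mathfrak{G} = G/G'' = \mathrm{Gal}(\mathrm{F}_3^2 K \vert K)$. The whole argument rests on the fact that $G$ is a \emph{$\sigma$-group}: the nontrivial element of $\mathrm{Gal}(K\vert\mathbb{Q}) \cong C_2$ acts on $G^{\mathrm{ab}} \cong \mathrm{Cl}_3 K \cong (3,3)$ as inversion $x \mapsto x^{-1}$, and lifts to an involutory automorphism $\sigma$ of $G$ inducing this inversion on the abelianization. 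In particular the generator rank is $\dim_{\mathbb{F}_3}\mathrm{H}^1(G,\mathbb{F}_3) = \varrho = 2$.

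First I would control the relation rank $d_2(G) = \dim_{\mathbb{F}_3}\mathrm{H}^2(G,\mathbb{F}_3)$. By the Shafarevich--Koch bound for the maximal unramified pro-$3$ extension, using that $K$ is real (so the torsion-free unit rank equals $r_1 + r_2 - 1 = 1$) and that $\zeta_3 \notin K$ (so the root-of-unity correction term vanishes), one obtains
\begin{equation*}
d_2(G) \le \varrho + 1 = 3 .
\end{equation*}
Thus $G$ could a priori admit one ``superfluous'' relation beyond the two forced by the two generators, and this extra relation is precisely the room the tower would require in order to climb past $\mathrm{F}_3^2 K$.

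Next I would bring in the hypothesis. Its content is that $\langle 3^5,5\rangle$ and $\langle 3^5,7\rangle$ are \emph{Schur $\sigma$-groups}: finite metabelian $3$-groups with abelianization $(3,3)$, carrying the inverting involution $\sigma$, and with a \emph{balanced} presentation $d_2(\mathfrak{G}) = \varrho = 2$. I would certify this balancedness directly from the group data (equivalently, cite the classification of Schur $\sigma$-groups of order $3^5$ and coclass $2$). The mechanism of termination is then exposed by the five-term inflation--restriction sequence for $1 \to G'' \to G \to \mathfrak{G} \to 1$ with $\mathbb{F}_3$-coefficients: because $G'' \le \Phi(G)$ the inflation $\mathrm{H}^1(\mathfrak{G},\mathbb{F}_3) \to \mathrm{H}^1(G,\mathbb{F}_3)$ is an isomorphism, so a nontrivial $G''$ would inject $\mathrm{H}^1(G'',\mathbb{F}_3)^{\mathfrak{G}} \ne 0$ into $\mathrm{H}^2(\mathfrak{G},\mathbb{F}_3)$ by transgression and thereby consume one of the only two relations of the Schur group as a metabelian relation.

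The hard part will be to convert this bookkeeping into the desired contradiction, that is, to show that the balanced, $\sigma$-compatible presentation of $\mathfrak{G}$ genuinely forbids the extra relation permitted by the weak bound $d_2(G) \le 3$. The subtlety is that this upper bound by itself does not force metabelianness; the $\sigma$-equivariance must be fed in to prove that the ``unit relation'' is already absorbed by the inverting involution, so that in fact $d_2(G) = \varrho = 2$ and $G$ is itself a Schur $\sigma$-group. Once $G$ is known to be a Schur $\sigma$-group whose metabelian Schur $\sigma$-quotient $\mathfrak{G}$ has the same generator and relation rank, a cover-group argument (the relation module cannot grow under the surjection $G \twoheadrightarrow \mathfrak{G}$) yields $G'' = 1$ and $G \cong \mathfrak{G}$, whence $K < \mathrm{F}_3^1 K < \mathrm{F}_3^2 K = \mathrm{F}_3^\infty K$ and $\ell_3 K = 2$.
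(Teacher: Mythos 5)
Your proposal sketches a direct cohomological proof, whereas the paper's own ``proof'' is a citation: the statement was proved by Scholz--Taussky (and re-proved by Brink--Gold and Heider--Schmithals) for \emph{complex} quadratic base fields, and is transferred to arbitrary base fields via the single group-theoretic fact that the cover of a Schur $\sigma$-group is the singleton $\mathrm{cov}(\mathfrak{G})=\lbrace\mathfrak{G}\rbrace$. The comparison is largely moot, however, because your sketch has a genuine gap exactly where you announce ``the hard part''. For a real quadratic field the Shafarevich bound gives only $d_2(G)\le\varrho+1=3$, and there is no general principle by which $\sigma$-equivariance ``absorbs'' the unit relation and forces $d_2(G)=\varrho=2$: the paper's own tables show that for most other capitulation types over real quadratic fields the actual $3$-tower group really does have $d_2(G)=3$ (all of coclass $1$, and the candidates for sections c and E, for instance). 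So the step ``$\sigma$-compatibility $\Rightarrow d_2(G)=2$'' is not merely left unproved --- it is the entire content of the theorem, and it cannot be extracted from the arithmetic of $K$ alone. What makes the theorem true is a property of the two specific groups $\langle 3^5,5\rangle$ and $\langle 3^5,7\rangle$, namely that no finite $3$-group $H$ with $H\not\simeq\mathfrak{G}$ satisfies $H/H''\simeq\mathfrak{G}$; granting that, $G\simeq\mathfrak{G}$ follows regardless of whether the Shafarevich bound would have permitted a third relation.

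A second, smaller gap: even if you had $d_2(G)=2$, your closing step ``the relation module cannot grow under $G\twoheadrightarrow\mathfrak{G}$, hence $G''=1$'' does not follow from the five-term sequence alone. Transgression only injects $\mathrm{H}^1(G'',\mathbb{F}_3)^{\mathfrak{G}}$ into the kernel of the inflation $\mathrm{H}^2(\mathfrak{G},\mathbb{F}_3)\to\mathrm{H}^2(G,\mathbb{F}_3)$, and a nontrivial $G''$ is perfectly compatible with $d_2(G)=d_2(\mathfrak{G})=2$ unless one feeds in concrete information about the relation structure of these particular groups --- which is precisely what the cited classical proofs, and the computation of $\mathrm{cov}(\mathfrak{G})$ in the reference the paper invokes, actually do. As written, your argument establishes the correct framework (the $\sigma$-action, the Shafarevich bound, the five-term sequence) but proves neither of the two assertions on which the conclusion rests.
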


\begin{proof}
This statement has been proved by Scholz and Taussky in
\cite[\S\ 3, p. 39]{SoTa}.
It has been confirmed with different techniques by Brink and Gold in
\cite[Thm. 7, pp. 434--435]{BrGo},
and by Heider and Schmithals in
\cite[Lem. 5, p. 20]{HeSm}.
All three proofs were expressed for complex quadratic base fields \(K\),
but since the cover
\cite[Dfn. 5.1, p. 30]{Ma10}
of a Schur \(\sigma\)-group \(\mathfrak{G}\) consists of a single element,
\(\mathrm{cov}(\mathfrak{G})=\lbrace\mathfrak{G}\rbrace\),
the statement is actually valid for any algebraic number field \(K\),
in particular also for a real quadratic field \(K\).
\end{proof}


\noindent
Table
\ref{tbl:StatisticsCc2}
shows the computational results for \(\mathrm{cc}(\mathfrak{G})=2\),
using the relative identifiers of the ANUPQ package
\cite{GNO}
for groups \(\mathfrak{G}\) of order \(\#\mathfrak{G}\ge 3^8\),
resp. \(G\) of order \(\#G\ge 3^8\).
The possibilities for the \(3\)-class tower group \(G\)
are complete for the TKTs c.\(18\), c.\(21\), E.\(6\), E.\(8\), E.\(9\) and E.\(14\),
constituting the cover of the corresponding metabelian group \(\mathfrak{G}\).
For the TKTs c.\(18\uparrow\), c.\(21\uparrow\),
the cover \(\mathrm{cov}(\mathfrak{G})\) is given in
\cite[Cor. 7.1, p. 38, and Cor. 8.1, p.48]{Ma10},
and for E.\(6\uparrow\), E.\(8\uparrow\), E.\(9\uparrow\) and E.\(14\uparrow\),
it has been determined in
\cite[Cor 21.3, p. 187]{Ma6}.
A selection of densely populated vertices is given for the sporadic TKTs G.\(19^\ast\) and H.\(4^\ast\),
according to
\cite[Tbl. 4--5]{Ma11}.
We denote two important branch vertices of depth \(1\) by
\(N_{9,j}:=\langle 3^7,303\rangle-\#1;1-\#1;j\) for \(j\in\lbrace 3,5\rbrace\).



\renewcommand{\arraystretch}{1.1}

\begin{table}[ht]
\caption{Statistics of \(3\)-capitulation types \(\varkappa=\varkappa(K)\) of fields \(K\) with \(\mathrm{cc}(\mathfrak{G})=2\)}
\label{tbl:StatisticsCc2}
\begin{center}
\begin{tabular}{|c|c|c||r|r|r||c|c|r|}
\hline
              Type & \(\varkappa\) &          \(\tau\) &         AF &          RF &               MD &     \(\mathfrak{G}=\mathrm{G}_3^2\) &     \(\#\mathrm{Aut}\) & \(d_2\) \\
                   &               &                   &            &             &                  &           \(G=\mathrm{G}_3^\infty\) &                        &         \\
\hline
          c.\(18\) &      \(0313\) & \(2^2,21,1^3,21\) &    \(347\) &  \(10.4\%\) &     \(534\,824\) &           \(\langle 3^6,49\rangle\) &             \(2^23^8\) &   \(4\) \\
                   &               &                   &            &             &                  & \(\langle 3^7,284\vert 291\rangle\) & \(2^23^8\vert 2^13^8\) &   \(3\) \\
          c.\(21\) &      \(0231\) &    \(2^2,(21)^3\) &    \(358\) &  \(10.8\%\) &     \(540\,365\) &           \(\langle 3^6,54\rangle\) &             \(2^23^8\) &   \(4\) \\
                   &               &                   &            &             &                  & \(\langle 3^7,307\vert 308\rangle\) & \(2^23^8\vert 2^13^8\) &   \(3\) \\
\hline
  c.\(18\uparrow\) &      \(0313\) & \(3^2,21,1^3,21\) &      \(8\) &   \(0.2\%\) & \(13\,714\,789\) &    \(\langle 3^7,285\rangle-\#1;1\) &          \(2^23^{12}\) &   \(4\) \\
  c.\(21\uparrow\) &      \(0231\) &    \(3^2,(21)^3\) &     \(12\) &   \(0.4\%\) &  \(1\,001\,957\) &    \(\langle 3^7,303\rangle-\#1;1\) &          \(2^23^{12}\) &   \(4\) \\
\hline
           D.\(5\) &      \(4224\) & \(1^3,21,1^3,21\) &    \(546\) &  \(16.4\%\) &     \(631\,769\) &            \(\langle 3^5,7\rangle\) &             \(2^23^6\) &   \(2\) \\
          D.\(10\) &      \(2241\) &  \(21,21,1^3,21\) & \(1\,122\) &  \(33.7\%\) &     \(422\,573\) &            \(\langle 3^5,5\rangle\) &             \(2^13^6\) &   \(2\) \\
\hline
           E.\(6\) &      \(1313\) &  \(32,21,1^3,21\) &     \(40\) &   \(1.2\%\) &  \(5\,264\,069\) &          \(\langle 3^7,288\rangle\) &          \(2^13^{10}\) &   \(3\) \\
                   &               &                   &            &             &                  &     \(\langle 3^6,49\rangle-\#2;4\) &          \(2^13^{10}\) &   \(2\) \\
           E.\(8\) &      \(1231\) &     \(32,(21)^3\) &     \(30\) &   \(0.9\%\) &  \(6\,098\,360\) &          \(\langle 3^7,304\rangle\) &          \(2^13^{10}\) &   \(3\) \\
                   &               &                   &            &             &                  &     \(\langle 3^6,54\rangle-\#2;4\) &          \(2^13^{10}\) &   \(2\) \\
           E.\(9\) &      \(2231\) &     \(32,(21)^3\) &     \(83\) &   \(2.5\%\) &     \(342\,664\) & \(\langle 3^7,302\vert 306\rangle\) &          \(2^13^{10}\) &   \(3\) \\
                   &               &                   &            &             &                  & \(\langle 3^6,54\rangle-\#2;2\vert 6\) &       \(2^13^{10}\) &   \(2\) \\
          E.\(14\) &      \(2313\) &  \(32,21,1^3,21\) &     \(63\) &   \(1.9\%\) &  \(3\,918\,837\) & \(\langle 3^7,289\vert 290\rangle\) &          \(2^13^{10}\) &   \(3\) \\
                   &               &                   &            &             &                  & \(\langle 3^6,49\rangle-\#2;5\vert 6\) &       \(2^13^{10}\) &   \(2\) \\
\hline
   E.\(6\uparrow\) &      \(1313\) &  \(43,21,1^3,21\) &      \(1\) &             & \(75\,393\,861\) & \(\langle 3^7,285\rangle-\#1;1-\#1;4\) &       \(2^13^{14}\) &   \(3\) \\
   E.\(8\uparrow\) &      \(1231\) &     \(43,(21)^3\) &      \(2\) &             & \(26\,889\,637\) & \(\langle 3^7,303\rangle-\#1;1-\#1;2\) &       \(2^13^{14}\) &   \(3\) \\
   E.\(9\uparrow\) &      \(2231\) &     \(43,(21)^3\) &      \(1\) &             & \(79\,043\,324\) & \(\langle 3^7,303\rangle-\#1;1-\#1;4\vert 6\) & \(2^13^{14}\) &  \(3\) \\
  E.\(14\uparrow\) &      \(2313\) &  \(43,21,1^3,21\) &      \(1\) &             & \(70\,539\,596\) & \(\langle 3^7,285\rangle-\#1;1-\#1;5\vert 6\) & \(2^13^{14}\) &  \(3\) \\
\hline
          G.\(16\) &      \(4231\) &     \(32,(21)^3\) &     \(27\) &   \(0.8\%\) &  \(8\,711\,453\) & \(\langle 3^7,301\vert 305\rangle-\#1;4\) &    \(2^23^{12}\) &   \(4\) \\
\hline
  G.\(16\uparrow\) &      \(4231\) &     \(43,(21)^3\) &      \(1\) &             & \(59\,479\,964\) &            \(N_{9,3\vert 5}-\#1;2\) &          \(2^13^{16}\) &   \(4\) \\
\hline
     G.\(19^\ast\) &      \(2143\) &        \((21)^4\) &    \(156\) &   \(4.7\%\) &     \(214\,712\) &           \(\langle 3^6,57\rangle\) &             \(2^43^8\) &   \(4\) \\
                   &               &                   &            &             &                  &          \(\langle 3^7,311\rangle\) &             \(2^23^8\) &   \(3\) \\
\hline
      H.\(4^\ast\) &      \(4443\) &\((1^3)^2,21,1^3\) &    \(493\) &  \(14.8\%\) &     \(957\,013\) &           \(\langle 3^6,45\rangle\) &             \(2^23^8\) &   \(4\) \\
                   &               &                   &            &             &                  & \(\langle 3^7,270\vert 271\rangle\) & \(2^23^8\vert 2^23^9\) &   \(3\) \\
                   &               &                   &            &             &                  & \(\langle 3^7,272\vert 273\rangle\) & \(2^13^9\vert 2^13^8\) &   \(3\) \\
\hline
           H.\(4\) &      \(3313\) &  \(32,21,1^3,21\) &     \(37\) &   \(1.1\%\) &  \(1\,162\,949\) & \(\langle 3^7,286\vert 287\rangle-\#1;2\) &    \(2^23^{12}\) &   \(4\) \\
\hline
\multicolumn{3}{|c||}{Total of \(\mathrm{cc}(\mathfrak{G})=2\)} & \(3\,328\) & \multicolumn{5}{|c|}{\(9.61\%\) with respect to \(34\,631\)} \\
\hline
\end{tabular}
\end{center}
\end{table}


\smallskip
Whereas the sufficient criterion for \(\ell_3{K}=2\) in Theorem
\ref{thm:SectionD}
is known since \(1934\) already,
the following statement of \(2015\) is brand-new and
constitutes one of the few sufficient criteria for \(\ell_3{K}=3\),
that is, for the long desired three-stage class field towers
\cite{BuMa}.

\begin{theorem}
\label{thm:cSection}
(Section \(\mathrm{c}\))
The Hilbert \(3\)-class field tower
of a real quadratic field \(K\) whose second \(3\)-class group
\(\mathfrak{G}=\mathrm{Gal}(\mathrm{F}_3^2{K}\vert K)\)
is one of the six groups \(\langle 3^6,49\rangle\), \(\langle 3^6,54\rangle\),
\(\langle 3^7,285\rangle-\#1;1\), \(\langle 3^7,303\rangle-\#1;1\),
\(\langle 3^7,285\rangle(-\#1;1)^3\), \(\langle 3^7,285\rangle(-\#1;1)^3\)
has exact length \(\ell_3{K}=3\), that is,\\
\(K<\mathrm{F}_3^1{K}<\mathrm{F}_3^2{K}<\mathrm{F}_3^3{K}=\mathrm{F}_3^\infty{K}\).
\end{theorem}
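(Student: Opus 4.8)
The plan is to convert the statement about the tower length \(\ell_3{K}\) into a statement about the derived length \(\mathrm{dl}(G)\) of the \(3\)-class tower group \(G=\mathrm{Gal}(\mathrm{F}_3^\infty{K}\vert K)\). Since \(\mathrm{F}_3^2{K}\) is the maximal unramified metabelian \(3\)-extension of \(K\), it is the fixed field of the second derived subgroup \(G''\), so that \(\mathfrak{G}\cong G/G''\) and, more generally, \(\mathrm{F}_3^n{K}\) is fixed by the \(n\)-th derived subgroup \(G^{(n)}\); consequently \(\ell_3{K}=\mathrm{dl}(G)\). Thus for each of the six listed groups \(\mathfrak{G}\) it suffices to prove two things: that \(G\ne\mathfrak{G}\), i.e. \(G''\ne 1\), which gives \(\ell_3{K}\ge 3\); and that \(G'''=1\), which gives \(\ell_3{K}\le 3\). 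The mechanism contrasts sharply with Theorem \ref{thm:SectionD}, where \(\mathfrak{G}\) is a Schur \(\sigma\)-group with \(d_2(\mathfrak{G})=2\) and can therefore be the tower group itself.

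For the lower bound I would argue through the relation rank \(d_2=\dim_{\mathbb{F}_3}\mathrm{H}^2(-,\mathbb{F}_3)\). Each of the six metabelian groups \(\mathfrak{G}\) has \(d_2(\mathfrak{G})=4\), as recorded in Table \ref{tbl:StatisticsCc2} for the ground and first excited states and by the analogous computation for the higher ones. On the other hand, the theorem of Shafarevich, in its sharpened form for a real quadratic base field, bounds the relation rank of the tower group: with \(p=3\), signature \((r_1,r_2)=(2,0)\), a single fundamental unit, and \(\zeta_3\notin K\), the space \(E_K/E_K^3\) of units modulo cubes is one-dimensional, whence \(d_2(G)\le\varrho+1=3\). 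Were \(G\) metabelian we would have \(G\cong\mathfrak{G}\) as abstract groups and therefore \(d_2(G)=d_2(\mathfrak{G})=4\), contradicting this bound. Hence \(G''\ne 1\) and \(\mathrm{dl}(G)\ge 3\).

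For the upper bound I would determine the cover \(\mathrm{cov}(\mathfrak{G})\), namely all finite \(3\)-groups \(G\) with \(G/G''\cong\mathfrak{G}\), generator rank \(d_1(G)=2\), and admissible relation rank \(d_2(G)\le 3\). For the ground states \(\langle 3^6,49\rangle\) and \(\langle 3^6,54\rangle\) this cover is completely known: it consists of the nonmetabelian descendants of order \(3^7\) of Table \ref{tbl:StatisticsCc2} (the candidates \(\langle 3^7,284\vert 291\rangle\), resp. \(\langle 3^7,307\vert 308\rangle\)). Here derived length \(3\) is forced by a cardinality count: since \(\lvert G/G''\rvert=\lvert\mathfrak{G}\rvert=3^6\) while \(\lvert G\rvert=3^7\), the group \(G''\) has order \(3\), hence is cyclic and abelian, so \(G'''=1\). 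For the excited states \(\langle 3^7,285\rangle-\#1;1\) and \(\langle 3^7,303\rangle-\#1;1\) and their higher analogues, the cover has been determined in \cite{Ma10} and \cite{Ma6}, and one checks likewise that every member satisfies \(G'''=1\). Combined with the lower bound this yields \(\mathrm{dl}(G)=3\), hence \(\ell_3{K}=3\) and the tower \(K<\mathrm{F}_3^1{K}<\mathrm{F}_3^2{K}<\mathrm{F}_3^3{K}=\mathrm{F}_3^\infty{K}\).

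The main obstacle is the completeness and finiteness of the cover. A priori a metabelian \(\mathfrak{G}\) can be the metabelianization of arbitrarily large finite \(3\)-groups, or even of an infinite pro-\(3\) group, in which case the tower could exceed length \(3\) or fail to terminate. The decisive point is therefore the verification — carried out by the \(p\)-group generation algorithm and the structural analysis of \cite{Ma10,Ma6} — that imposing \(d_1(G)=2\) together with the Shafarevich bound \(d_2(G)\le 3\) reduces the cover to finitely many groups, all of derived length \(3\). Ensuring that no candidate with \(d_2\le 3\) has been overlooked, so that the cover is genuinely complete, is the delicate step on which the \emph{exactness} \(\ell_3{K}=3\), as opposed to merely \(\ell_3{K}\ge 3\), ultimately rests.
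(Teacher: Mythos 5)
Your argument is correct and is exactly the strategy underlying the results of \cite{Ma10} that the paper invokes: the paper's own ``proof'' is nothing but the citation of Thm.~7.1, Cor.~7.1, Cor.~7.3, Thm.~8.1, Cor.~8.1 and Cor.~8.3 there, and those statements are established precisely by playing the Shafarevich bound \(d_2(G)\le d_1(G)+r_1+r_2-1=3\) for a real quadratic field against \(d_2(\mathfrak{G})=4\) (lower bound \(\ell_3{K}\ge 3\)), and by the determination of the finite cover \(\mathrm{cov}(\mathfrak{G})\), all of whose members have derived length \(3\) (upper bound). You also correctly identify the completeness of the cover computation via the \(p\)-group generation algorithm as the one step that cannot be reproduced by hand and on which the exactness \(\ell_3{K}=3\) rests.
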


\begin{proof}
This is the union of Thm. 7.1, Cor. 7.1, Cor 7.3, Thm 8.1, Cor 8.1, and Cor 8.3 in
\cite{Ma10}.
\end{proof}


A sufficient criterion for \(\ell_3{K}=3\) similar to Theorem
\ref{thm:cSection}
has been given in
\cite[Thm. 6.1, pp. 751--752]{Ma8}
for \textit{complex} quadratic fields with TKTs in section E.
Due to the relation rank \(d_2\) of the involved groups,
only a weaker statement is possible for \textit{real} quadratic fields with such TKTs.

\begin{theorem}
\label{thm:SectionE}
(Section \(\mathrm{E}\))
The Hilbert \(3\)-class field tower
of a real quadratic field \(K\) whose second \(3\)-class group
\(\mathfrak{G}=\mathrm{Gal}(\mathrm{F}_3^2{K}\vert K)\)
is one of the twelve groups \(\langle 3^7,288\ldots 290\rangle\), \(\langle 3^7,302\vert 304\vert 306\rangle\),
\(\langle 3^7,285\rangle-\#1;1-\#1;4\ldots 6\), \(\langle 3^7,303\rangle-\#1;1-\#1;2\vert 4\vert 6\)
has either length \(\ell_3{K}=3\), that is,
\(K<\mathrm{F}_3^1{K}<\mathrm{F}_3^2{K}<\mathrm{F}_3^3{K}=\mathrm{F}_3^\infty{K}\),
or length \(\ell_3{K}=2\), that is,
\(K<\mathrm{F}_3^1{K}<\mathrm{F}_3^2{K}=\mathrm{F}_3^\infty{K}\).
\end{theorem}

\begin{proof}
This is the union of Thm. 4.1 and Thm. 4.2 in
\cite{Ma11}.
\end{proof}

\begin{example}
That both cases \(\ell_3{K}\in\lbrace 2,3\rbrace\) occur with nearly equal frequency
has been shown for the ground states in Thm. 5.5 and Thm. 5.6 of
\cite{Ma11}.
Due to our extended computations,
we are now in the position to prove that the same is true for the first excited states.
We have \(\ell_3{K}=3\) for the two fields \(K=\mathbb{Q}(\sqrt{d})\) with
\(d=70\,539\,596\), type E.\(14\uparrow\), and \(d=75\,393\,861\), type E.\(6\uparrow\),
but only \(\ell_3{K}=2\) for the three fields with
\(d=79\,043\,324\), type E.\(9\uparrow\), and \(d\in\lbrace 26\,889\,637,\ 98\,755\,469\rbrace\), both of type E.\(8\uparrow\),
\end{example}


Recently, we have provided evidence of asymptotic frequency distributions
for three-stage class field towers,
similar to Conjecture
\ref{cnj:StatisticsCc1}
for two-stage towers.

\begin{conjecture}
\label{cnj:cSection}
For a sufficiently extensive range \(0<d<B\) of fundamental discriminants,
both, the absolute and relative frequencies
of realizations of the groups \(\langle 3^7,284\rangle\) and \(\langle 3^7,291\rangle\),
resp. \(\langle 3^7,307\rangle\) and \(\langle 3^7,308\rangle\)
as \(3\)-class tower groups \(\mathrm{G}_3^\infty{K}=\mathrm{G}_3^3{K}\)
of real quadratic fields \(K=\mathbb{Q}(\sqrt{d})\)
satisfy the proportion \(1:2\).
\end{conjecture}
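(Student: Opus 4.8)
The plan is to follow exactly the heuristic template underlying the explanation of Conjecture~\ref{cnj:StatisticsCc1}, now carried over from the two-stage towers of coclass~$1$ to the three-stage towers of section~$\mathrm{c}$. Inspired by Boston, Bush and Hajir \cite{BBH}, I would postulate that, for a sufficiently large bound~$B$, the asymptotic relative frequency with which a fixed group $G$ is realized as the $3$-class tower group $\mathrm{G}_3^\infty{K}$ of a real quadratic field $K=\mathbb{Q}(\sqrt{d})$ is proportional to the reciprocal $\#\mathrm{Aut}(G)^{-1}$ of the order of its automorphism group. By Theorem~\ref{thm:cSection} each of the four candidates $\langle 3^7,284\rangle$, $\langle 3^7,291\rangle$, $\langle 3^7,307\rangle$, $\langle 3^7,308\rangle$ yields a genuine length-$3$ tower with $G=\mathrm{G}_3^3{K}\ne\mathfrak{G}$, so the heuristic must be applied to $G$ itself rather than to its metabelianization $\mathfrak{G}$.

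The first concrete step is to read off the automorphism orders from the $G$-rows of Table~\ref{tbl:StatisticsCc2}. For the TKT c.$18$ the cover of $\mathfrak{G}=\langle 3^6,49\rangle$ consists of $\langle 3^7,284\rangle$ and $\langle 3^7,291\rangle$ with $\#\mathrm{Aut}=2^23^8$, resp.\ $2^13^8$; for c.$21$ the cover of $\mathfrak{G}=\langle 3^6,54\rangle$ consists of $\langle 3^7,307\rangle$ and $\langle 3^7,308\rangle$ with the same pair of orders. Forming reciprocals yields
\[
\#\mathrm{Aut}(\langle 3^7,284\rangle)^{-1}=\frac{1}{2^23^8}=\frac{1}{2}\cdot\frac{1}{2^13^8}=\frac{1}{2}\cdot\#\mathrm{Aut}(\langle 3^7,291\rangle)^{-1},
\]
and the identical relation between $\langle 3^7,307\rangle$ and $\langle 3^7,308\rangle$, which is precisely the asserted proportion $1:2$.

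The hard part is not this elementary arithmetic but the justification of the underlying heuristic in the present regime. The distribution of \cite{BBH} is formulated and numerically supported for \emph{complex} quadratic fields, whereas here $K$ is \emph{real}; moreover, whereas Conjecture~\ref{cnj:StatisticsCc1} concerns metabelian groups of derived length~$2$, the groups occurring here have derived length~$3$, so the relation rank $d_2$ and the accompanying cohomological data that force the tower to reach length~$3$ (cf.\ Theorem~\ref{thm:cSection}) are considerably more delicate. A second, empirical obstacle is that to confront the predicted ratio with actual counts one must decide, for each field of type c.$18$ or c.$21$, which of the two covering groups is realized; since both members share the same metabelianization $\mathfrak{G}$ and the same Artin pattern $\mathrm{AP}(K)$, they can be separated only at the third stage of the tower, beyond the reach of the transfer-based pattern recognition used elsewhere in this article. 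For these reasons I would present the argument, exactly as for Conjecture~\ref{cnj:StatisticsCc1}, explicitly as an attempt of an explanation rather than as a rigorous proof.
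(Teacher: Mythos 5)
Your proposal is correct and follows essentially the same route as the paper: both apply the Boston--Bush--Hajir-style heuristic that the asymptotic frequency of a $3$-class tower group $G$ is proportional to $\#\mathrm{Aut}(G)^{-1}$, read off the orders $2^23^8$ and $2^13^8$ for $\langle 3^7,284\rangle$ and $\langle 3^7,291\rangle$ (resp.\ $\langle 3^7,307\rangle$ and $\langle 3^7,308\rangle$) from the cover data, and obtain the ratio $1:2$ from the reciprocals. The only addition in the paper's version is a citation of empirical counts ($18\approx 2\cdot 10$ for $B=10^7$) supporting the first pair, together with the remark that the second pair could not be checked because distinguishing $\langle 3^7,307\rangle$ from $\langle 3^7,308\rangle$ requires higher-order invariants --- a difficulty you correctly anticipated.
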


\begin{proof}
(Attempt of a heuristic justification of the conjecture)\\
For the first two groups, which form the cover of \(\langle 3^6,49\rangle\), we have the reciprocal order relation
\[\#\mathrm{Aut}(\langle 3^7,291\rangle)^{-1}=\frac{1}{2^13^8}
=2\cdot\frac{1}{2^23^8}=2\cdot\#\mathrm{Aut}(\langle 3^7,284\rangle)^{-1},\]
which is nearly fulfilled by the statistical information
\(18\approx 2\cdot 10\), resp. \(64\%\approx 2\cdot 36\%\),
given in
\cite[Thm. 7.2, pp. 34--35]{Ma10}
for \(B=10^7\).\\
For the trailing two groups, which form the cover of \(\langle 3^6,54\rangle\),
only arithmetical invariants of higher order are known for distinguishing between \(\langle 3^7,307\rangle\) and \(\langle 3^7,308\rangle\).
It would have been too time consuming to compute these invariants for
\cite[Thm. 8.2, p. 45]{Ma10}.
\end{proof}


\begin{conjecture}
\label{cnj:SectionH}
For a sufficiently extensive range \(0<d<B\) of fundamental discriminants,
both, the absolute and relative frequencies
of realizations of the groups \(\langle 3^7,270\rangle\), \(\langle 3^7,271\rangle\),
\(\langle 3^7,272\rangle\) and \(\langle 3^7,273\rangle\)
as \(3\)-class tower groups \(\mathrm{G}_3^\infty{K}=\mathrm{G}_3^3{K}\)
of real quadratic fields \(K=\mathbb{Q}(\sqrt{d})\)
satisfy the proportion \(3:1:2:6\).
\end{conjecture}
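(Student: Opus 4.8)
The plan is to justify this conjecture by the same heuristic principle that underlies Conjectures \ref{cnj:StatisticsCc1} and \ref{cnj:cSection}: following the statistical philosophy of Boston, Bush and Hajir \cite{BBH}, one expects the asymptotic relative frequency with which a fixed group \(G\) is realized as the \(3\)-class tower group \(\mathrm{G}_3^\infty{K}\) to be proportional to the reciprocal \(\#\mathrm{Aut}(G)^{-1}\) of the order of its automorphism group. The four groups \(\langle 3^7,270\ldots 273\rangle\) constitute the cover \(\mathrm{cov}(\langle 3^6,45\rangle)\) of the metabelian group \(\mathfrak{G}=\langle 3^6,45\rangle\) of type H.\(4^\ast\), displayed in Table \ref{tbl:StatisticsCc2}, so all four share the same second \(3\)-class group and differ only as three-stage tower groups.

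First I would read off the automorphism group orders from the last but one column of Table \ref{tbl:StatisticsCc2}, obtaining \(\#\mathrm{Aut}(\langle 3^7,270\rangle)=2^23^8\), \(\#\mathrm{Aut}(\langle 3^7,271\rangle)=2^23^9\), \(\#\mathrm{Aut}(\langle 3^7,272\rangle)=2^13^9\) and \(\#\mathrm{Aut}(\langle 3^7,273\rangle)=2^13^8\). Clearing the common factor \((2^23^9)^{-1}\) from the four reciprocals then gives
\[\#\mathrm{Aut}(\langle 3^7,270\rangle)^{-1}:\#\mathrm{Aut}(\langle 3^7,271\rangle)^{-1}:\#\mathrm{Aut}(\langle 3^7,272\rangle)^{-1}:\#\mathrm{Aut}(\langle 3^7,273\rangle)^{-1}=3:1:2:6,\]
which is exactly the asserted proportion. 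This computation is entirely routine once the orders in the table are granted, and it runs in perfect parallel to the two-term and three-term reciprocal-order relations established in Conjectures \ref{cnj:cSection} and \ref{cnj:StatisticsCc1}.

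The hard part will be that this argument remains a heuristic rather than a rigorous proof, for two independent reasons. On the theoretical side, the proportionality to \(\#\mathrm{Aut}^{-1}\) rests on the Boston--Bush--Hajir framework \cite{BBH}, which has not been established for real quadratic base fields and is therefore at present only an expectation. On the computational side, no arithmetical invariants are yet known that would enable Algorithms \ref{alg:TKT} and \ref{alg:TTT} to separate the four groups of the cover \(\mathrm{cov}(\langle 3^6,45\rangle)\) from one another within the computed range \(0<d<10^8\); only their aggregate count of \(493\) realizations under type H.\(4^\ast\) is accessible. Consequently the numerical confirmation that accompanied the ground and excited states of the earlier conjectures cannot yet be reproduced for this four-term proportion, and a genuine proof would require either the development of such distinguishing invariants or an unconditional Boston--Bush--Hajir-type asymptotic statement.
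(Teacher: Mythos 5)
Your heuristic derivation is essentially the paper's own: both read the automorphism group orders \(2^23^8,\ 2^23^9,\ 2^13^9,\ 2^13^8\) from Table \ref{tbl:StatisticsCc2} and obtain \(3:1:2:6\) by clearing \((2^23^9)^{-1}\) from the reciprocals, invoking the Boston--Bush--Hajir philosophy, and both correctly flag that this remains a heuristic rather than a proof. The one point where you diverge from the paper is your claim that none of the four groups of \(\mathrm{cov}(\langle 3^6,45\rangle)\) can be separated, so that only the aggregate count of \(493\) is accessible: the paper states that only the middle pair \(\langle 3^7,271\rangle\) and \(\langle 3^7,272\rangle\) is indistinguishable, cumulates the proportion to \(3:3:6\) (i.e.\ \(1:1:2\)), and checks this against the actual counts \(5:8:11\) for \(B=10^7\) from \cite[Thm. 5.7]{Ma11}, while cautioning that \(24\) individuals are too few to be statistically meaningful. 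So a partial empirical confirmation does exist and forms part of the paper's justification, which your writeup misses.
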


\begin{proof}
(Attempt of an explanation)
All groups are contained in the cover of \(\langle 3^6,45\rangle\). We have the following relations between reciprocal orders
\[\#\mathrm{Aut}(\langle 3^7,270\rangle)^{-1}=\frac{1}{2^23^8}
=3\cdot\frac{1}{2^23^9}=3\cdot\#\mathrm{Aut}(\langle 3^7,271\rangle)^{-1},\]
\[\#\mathrm{Aut}(\langle 3^7,272\rangle)^{-1}=\frac{1}{2^13^9}
=2\cdot\frac{1}{2^23^9}=2\cdot\#\mathrm{Aut}(\langle 3^7,271\rangle)^{-1},\]
\[\#\mathrm{Aut}(\langle 3^7,273\rangle)^{-1}=\frac{1}{2^13^8}
=3\cdot\frac{1}{2^13^9}=3\cdot\#\mathrm{Aut}(\langle 3^7,272\rangle)^{-1}.\]
Unfortunately,
no arithmetical invariants are known for distinguishing between \(\langle 3^7,271\rangle\) and \(\langle 3^7,272\rangle\).
Therefore, we must replace the two values in the middle of the proportion \(3:1:2:6\)
by a cumulative value \(3:3:6\), resp. \(1:1:2\).
The resulting proportion is fulfilled approximately by the statistical information
\(2\cdot 5\approx 2\cdot 8\approx 11\), resp. \(2\cdot 19\%\approx 2\cdot 29\%\approx 41\%\),
given in
\cite[Thm. 5.7]{Ma11}
for \(B=10^7\).
However, a total of \(24\) individuals cannot be viewed as a statistical ensemble yet.
\end{proof}



\renewcommand{\arraystretch}{1.1}

\begin{table}[hb]
\caption{Statistics of \(3\)-capitulation types \(\varkappa=\varkappa(K)\) of fields \(K\) with \(\mathrm{cc}(\mathfrak{G})=3\)}
\label{tbl:StatisticsCc3}
\begin{center}
\begin{tabular}{|c|c|c||r|r|r||c|c|r|}
\hline
              Type & \(\varkappa\) &            \(\tau\) &      AF &          RF &               MD & \(\mathfrak{G}=\mathrm{G}_3^2\) & \(\#\mathrm{Aut}\) & \(d_2\) \\
\hline
          b.\(10\) &      \(0043\) & \((2^2)^2,(1^3)^2\) &  \(95\) &  \(50.0\%\) &     \(710\,652\) & \(P_7-\#1;21\ldots 26\) & \(2^23^{12}\vert 2^13^{12}\) & \(5\) \\
\hline
  b.\(10\uparrow\) &      \(0043\) & \(3^2,2^2,(1^3)^2\) &   \(6\) &   \(3.2\%\) & \(17\,802\,872\) &         \(P_9-\#1;21\ldots 29\) &      \(2^13^{16}\) &   \(5\) \\
\hline
          d.\(19\) &      \(4043\) &  \(32,2^2,(1^3)^2\) &  \(49\) &  \(26.0\%\) &  \(2\,328\,721\) &            \(P_7-\#1;4\vert 5\) &      \(2^13^{12}\) &   \(5\) \\
          d.\(23\) &      \(1043\) &  \(32,2^2,(1^3)^2\) &  \(16\) &   \(8.4\%\) &  \(1\,535\,117\) &                   \(P_7-\#1;6\) &      \(2^13^{12}\) &   \(5\) \\
          d.\(25\) &      \(2043\) &  \(32,2^2,(1^3)^2\) &  \(22\) &  \(12.0\%\) & \(15\,230\,168\) &            \(P_7-\#1;7\vert 8\) &      \(2^23^{12}\) &   \(5\) \\
\hline
  d.\(19\uparrow\) &      \(4043\) &  \(43,2^2,(1^3)^2\) &   \(1\) &             & \(27\,970\,737\) &            \(P_9-\#1;2\vert 3\) &      \(2^13^{16}\) &   \(5\) \\
  d.\(23\uparrow\) &      \(1043\) &  \(43,2^2,(1^3)^2\) &   \(1\) &             & \(87\,303\,181\) &                   \(P_9-\#1;4\) &      \(2^13^{16}\) &   \(5\) \\
\hline
\multicolumn{3}{|c||}{Total of \(\mathrm{cc}(\mathfrak{G})=3\)} & \(190\) & \multicolumn{5}{|c|}{\(0.55\%\) with respect to \(34\,631\)} \\
\hline
\end{tabular}
\end{center}
\end{table}



\subsection{Groups \(\mathfrak{G}\) of coclass \(\mathrm{cc}(\mathfrak{G})=3\)}
\label{ss:StatisticsCc3}

There are \(190\) fields
whose second \(3\)-class group \(\mathfrak{G}\) is of coclass \(\mathrm{cc}(\mathfrak{G})=3\)
They constitute a very small contribution of \(0.55\%\).
The corresponding relative frequency for the restricted range \(0<d<10^7\)
is \(\frac{10}{2\,576}\approx 0.4\%\), which can be figured out from
\cite[Tbl. 5, p. 499]{Ma1}
or, more easily, from
\cite[Tbl. 6.2, p. 451]{Ma3}.
Thus, there is a slight \textit{increase} of \(0.15\%\) for the relative frequency of \(\mathrm{cc}(\mathfrak{G})=3\)
in the extended range.

For the groups \(\mathfrak{G}\) of coclass \(\mathrm{cc}(\mathfrak{G})\ge 3\),
the problem of determining the corresponding \(3\)-class tower group \(G\)
is considerably harder than for \(\mathrm{cc}(\mathfrak{G})\le 2\), and up to now it is still open.


In Table
\ref{tbl:StatisticsCc3},
we denote two important mainline vertices of the coclass-\(2\) tree \(\mathcal{T}^2(\langle 3^7,64\rangle)\) by
\(P_7:=\langle 3^7,64\rangle\) and \(P_9:=P_7-\#1;3-\#1;1\),
and we give the statistics for \(\mathrm{cc}(\mathfrak{G})=3\).



\subsection{Groups \(\mathfrak{G}\) of coclass \(\mathrm{cc}(\mathfrak{G})=4\)}
\label{ss:StatisticsCc4}

We only have \(25\) fields
whose second \(3\)-class group \(\mathfrak{G}\) is of coclass \(\mathrm{cc}(\mathfrak{G})=4\)
They constitute a negligible contribution of \(0.07\%\).
The corresponding relative frequency for the restricted range \(0<d<10^7\)
is \(\frac{3}{2\,576}\approx 0.1\%\), which can be seen in
\cite[Tbl. 6.9, p. 454]{Ma3}.
So there is a slight \textit{decrease} of \(0.03\%\) for the relative frequency of \(\mathrm{cc}(\mathfrak{G})=4\)
in the extended range.



\renewcommand{\arraystretch}{1.1}

\begin{table}[ht]
\caption{Statistics of \(3\)-capitulation types \(\varkappa=\varkappa(K)\) of fields \(K\) with \(\mathrm{cc}(\mathfrak{G})=4\)}
\label{tbl:StatisticsCc4}
\begin{center}
\begin{tabular}{|c|c|c||r|r|r||c|c|r|}
\hline
            Type & \(\varkappa\) &           \(\tau\) &    AF &       RF &               MD &        \(\mathfrak{G}=\mathrm{G}_3^2\) & \(\#\mathrm{Aut}\) & \(d_2\) \\
\hline
   d.\(25^\ast\) &      \(0143\) & \(3^2,32,(1^3)^2\) & \(4\) & \(16\%\) &  \(8\,491\,713\) &                  \(S_{10,57\vert 59}\) & \(2^23^{16}\) & \(5\) \\
\hline
         F.\(7\) &      \(3443\) & \((32)^2,(1^3)^2\) & \(3\) & \(12\%\) & \(10\,165\,597\) &                         \(P_7-\#2;55\) & \(2^13^{14}\) & \(4\) \\
                 &               &                    &       &          &                  &                 \(P_7-\#2;56\vert 58\) & \(2^23^{14}\) & \(4\) \\
        F.\(11\) &      \(1143\) & \((32)^2,(1^3)^2\) & \(3\) & \(12\%\) & \(66\,615\,244\) &                 \(P_7-\#2;36\vert 38\) & \(2^13^{14}\) & \(4\) \\
        F.\(12\) &      \(1343\) & \((32)^2,(1^3)^2\) & \(6\) & \(24\%\) & \(22\,937\,941\) & \(P_7-\#2;43\vert 46\vert 51\vert 53\) & \(2^13^{14}\) & \(4\) \\
        F.\(13\) &      \(3143\) & \((32)^2,(1^3)^2\) & \(5\) & \(20\%\) &  \(8\,321\,505\) & \(P_7-\#2;41\vert 47\vert 50\vert 52\) & \(2^13^{14}\) & \(4\) \\
\hline
 F.\(7\uparrow\) &      \(3443\) &  \(43,32,(1^3)^2\) & \(1\) &          & \(24\,138\,593\) &                     \(S_{10,39\vert 44}-\#1;5\vert 6\) & \(2^13^{18}\) & \(4\) \\
F.\(12\uparrow\) &      \(1343\) &  \(43,32,(1^3)^2\) & \(1\) &          & \(86\,865\,820\) & \(S_{10,39}-\#1;2\vert 9\), \(S_{10,44}-\#1;3\vert 8\) & \(2^13^{18}\) & \(4\) \\
                 &               &                    &       &          &                  &               \(S_{10,54}-\#1;2\vert 4\vert 6\vert 8\) & \(2^13^{18}\) & \(4\) \\
F.\(13\uparrow\) &      \(3143\) &  \(43,32,(1^3)^2\) & \(1\) &          &  \(8\,127\,208\) & \(S_{10,39}-\#1;3\vert 8\), \(S_{10,44}-\#1;2\vert 9\) & \(2^13^{18}\) & \(4\) \\
                 &               &                    &       &          &                  & \(S_{10,57}-\#1;2\vert 4\), \(S_{10,59}-\#1;3\vert 4\) & \(2^13^{18}\) & \(4\) \\
\hline
        H.\(4\)i &      \(4443\) &  \(43,32,(1^3)^2\) & \(1\) &          & \(54\,313\,357\) &                                          \(T_9-\#1;7\) & \(2^13^{16}\) & \(4\) \\
\hline
\multicolumn{3}{|c||}{Total of \(\mathrm{cc}(\mathfrak{G})=4\)} & \(25\) & \multicolumn{5}{|c|}{\(0.07\%\) with respect to \(34\,631\)} \\
\hline
\end{tabular}
\end{center}
\end{table}


In Table
\ref{tbl:StatisticsCc4},
we denote some crucial mainline vertices of coclass-\(4\) trees \(\mathcal{T}^4(S_{9,j})\) by\\
\(S_{9,j}:=\langle 3^7,64\rangle-\#2;j\) and
\(S_{10,39}:=S_{9,39}-\#1;7\), \(S_{10,44}:=S_{9,44}-\#1;1\), \(S_{10,54}:=S_{9,54}-\#1;8\),\\
\(S_{10,57}:=S_{9,57}-\#1;1\), \(S_{10,59}:=S_{9,59}-\#1;6\),\\
a sporadic vertex by \(T_9:=\langle 3^7,64\rangle-\#2;34\),
and we give the computational results for \(\mathrm{cc}(\mathfrak{G})=4\).

For the essential difference between the location of the groups \(\mathfrak{G}\)
as vertices of coclass trees for the types d.\(25^\ast\) and d.\(25\), see
\cite[Thm. 3.3--3.4 and Exm. 3.1, pp. 490--492]{Ma2}.

The single occurrence of type H.\(4\) belongs to the \textit{irregular variant} (i),
where \(\mathrm{Cl}_3\mathrm{F}_3^1{K}\simeq (9,9,9,9)\).
This is explained in
\cite[p. 498]{Ma1}
and
\cite[pp. 454--455]{Ma3}.
It is the only case in Table
\ref{tbl:StatisticsCc4}
where \(\mathfrak{G}\) is determined uniquely.



\section{Acknowledgements}
\label{s:Acknowledgements}

\noindent
The author gratefully acknowledges that his research is supported
by the Austrian Science Fund (FWF): P 26008-N25.

\newpage


\end{document}